\newcommand{\g}{{\mathfrak g}}
\newcommand{\h}{{\mathfrak h}}
\newcommand{\m}{{\mathfrak m}}
\newcommand{\s}{{\mathfrak s}}
\newcommand{\mt}{\mathfrak t}
\newcommand{\n}{{\mathfrak n}}
\newcommand{\p}{{\mathfrak p}}
\newcommand{\mk}{{\mathfrak k}}
\newcommand{\ma}{{\mathfrak a}}
\newcommand{\mb}{{\mathfrak b}}
\newcommand{\mc}{{\mathfrak c}}
\newcommand{\q}{{\mathfrak q}}
\newcommand{\0}{{\bf 0}}
\newcommand{\C}{{\bf C}}
\newcommand{\R}{{\bf R}}
\newcommand{\Z}{{\bf Z}}
\newcommand{\cL}{{\mathcal{L}}}
\theoremstyle{plain} 
\newtheorem{Th}{\indent\sc Theorem}
\newtheorem{Lm}{\indent\sc Lemma}
\newtheorem{Cr}{\indent\sc Corollary}
\newtheorem{Ps}{\indent\sc Proposition}
\theoremstyle{definition}
\newtheorem{Df}{\indent\sc Definition}
\newtheorem{Rm}{\indent\sc Remark}
\newtheorem{Ex}{\indent\sc Example}
\newcommand{\im}{\sqrt{-1}\,}
\def\address#1#2{\begingroup
\noindent\parbox[t]{7.8cm}{
\small{\scshape\ignorespaces#1}\par\vskip1ex
\noindent\small{\itshape E-mail address}
\/: #2\par\vskip4ex}\hfill
\endgroup}
\title{
\huge Compact Homogeneous Locally Conformally K\"ahler Manifolds
}
\author{
\textsc{Keizo Hasegawa and Yoshinobu Kamishima}
}
\date{}
\begin{document}

\maketitle

\footnote{ 
2010 \textit{Mathematics Subject Classification}.
Primary 32M10, 53A30; Secondary 53B35.
}
\footnote{ 
\textit{Key words and phrases}.
locally conformally K\"ahler structure,
homogenous Hermitian manifolds, Vaisman manifolds, Hopf surfaces.
}

\begin{abstract}
In this paper we show as main results two structure theorems of a compact homogeneous
locally conformally K\"ahler (or shortly l.c.K.) manifold,
a holomorphic structure theorem asserting that it has a structure of holomorphic principal fiber bundle over
a flag manifold with fiber a 1-dimensional complex torus, and a metric structure theorem asserting that
it is necessarily of Vaisman type. We also discuss and determine l.c.K. reductive Lie groups and compact
locally homogeneous l.c.K. manifolds of reductive Lie groups.
\end{abstract}

\section*{Introduction} 

A {\em locally conformally K\"ahler structure} 
({\em l.c.K. structure} for short)
on a differentiable manifold $M$
is a Hermitian structure $h$ on $M$ with its associated
fundamental form $\Omega$ satisfying 
$d \Omega = \theta \wedge \Omega$ for some
closed $1$-form $\theta$ (which is so called Lee form).
A differentiable manifold $M$ is called a 
{\em locally conformal K\"ahler manifold} 
({\em l.c.K. manifold} for short) if $M$ admits a l.c.K. structure.
Note that l.c.K. structure $\Omega$ is globally conformally
K\"ahler (or K\"ahler)
if and only if $\theta$ is exact (or 0 respectively); and a compact l.c.K.
manifold of non-K\"ahler type (i.e. the Lee form is neither $0$ nor exact)
never admits a K\"ahler structure  (compatible with the complex structure).
\smallskip

There have been recently extensive studies on l.c.K. manifolds 
(c.f. \cite{V3} \cite{DO}, \cite{KO}, \cite{Be}, \cite{GO}).
In this paper we are concerned with l.c.K. structures on
homogeneous and locally homogeneous spaces of Lie groups.
There exist many examples of compact non-K\"ahler
l.c.K. manifolds which are homogeneous or locally homogeneous
spaces of certain Lie groups, such as Hopf surfaces,
Inoue surfaces, Kodaira surfaces, or some class of elliptic
surfaces (c.f. \cite{Be}, \cite{Has}).
Their l.c.K. structures
are {\em homogeneous} or {\em locally homogeneous} in the sense
we will explicitly define in this paper (Definitions 1 or 2 respectively).
Note that homogeneous l.c.K. structures on
Lie groups are nothing but left-invariant l.c.K. structures, which
can be considered as l.c.K. structures on their Lie algebras.
\smallskip

In this paper we show as main results two structure theorems of
a compact homogeneous l.c.K. manifold, a holomorphic structure theorem asserting that
it is a holomorphic principal fiber bundle over a flag manifold with
fiber a $1$-dimensional complex torus (Theorem 1),  and a metric structure theorem asserting that
it is of {\em Vaisman type}, that is, the Lee form is parallel with respect to the Hermitian metric (Theorem 2).
It should be noted that the same structure theorem was proved by Vaisman ({\cite{V2})
for compact homogeneous l.c.K. manifolds of Vaisman type.
As a simple application of the theorem,
we can show that only compact homogeneous l.c.K. manifolds
of complex dimension $2$ are Hopf surfaces of homogeneous type (Theorem 3), and that
there exist no compact complex homogeneous l.c.K. manifolds;
in particular, no complex paralellizable manifolds admit their compatible l.c.K. structures (Corollary 3).
\smallskip

We will take the following key strategies to prove the main theorems.  A compact homogeneous l.c.K.
manifold $M$ is expressed as $M=G/H$, where $G$ is a compact Lie group and $H$ is a closed subgroup
of $G$. Since the Lie algebra $\g$ of $G$ is a reductive, $\g$ can be written as $\g = \mt + \s$,
where $\mt$ is the center of $\g$ and $\s=[\g,\g]$ is a semi-simple ideal of $\g$.
Our first observation is (1) $\g$ must satisfies $1 \le {\rm dim}\, \mt \le 2$. As the second observation,
applying a result of Hochschild and Serre, (2) we can express a l.c.K. form $\Omega$ as
$\Omega=-\theta \wedge \psi+d \psi$, where $\theta$ is the Lee form and $\psi$ is a $1$-form. 
Let $\xi \in \g$ be the Lee field (the associated vector field to $\theta$ w.r.t. $h$). We put $\xi=t+s \,(t \in \mt, s \in \s)$.
We define the vector field $\eta=J \xi$ (Reeb field) for the complex structure $J$,
and the Reeb form $\phi$ (the associated $1$-form to $\eta$ w.r.t. $h$).
We will see as the third observation  (3) under the condition $\Omega$ is $Jt$-invariant, we have $\psi=\phi$ and 
$\g=\p + \mk$, where $\p=<t, \eta>=<t, Jt>=<\xi, \eta>$, and $\mk= {\rm ker}\, \theta \cap {\rm ker}\, \phi$.
In particular we can express $\Omega=-\theta \wedge \phi +d \phi$ with $\phi \in \wedge^2 \mk^*$.
As the fourth observation, since the closure $K$ of the $1$-parameter subgroup of $G$ generated by $Jt$
is compact, (4) we can use the averaging method to make $\Omega$ on $M$
invariant by ${\rm Ad}(K)$: $\bar{\Omega}= \int_{K} \,{\rm Ad}(x)^* \Omega$ while preserving the complex structure $J$.
\smallskip

Our fifth observation is (5) we can consider a compact homogeneous l.c.K. manifold $M$
up to holomorphic isometry as $M=G/H$ with a homogeneous l.c.K. structure $(\Omega, J)$, 
satisfying $\g= \mt+\s\; ({\rm dim}\,\mt=1)$; and up to biholomorphism, as such with
a $Jt$-invariant l.c.K. form $\bar{\Omega}$. In particular we can express $M=S^1 \times _\Gamma S/H_0$,
where $S$ is a simply connected semi-simple Lie group, $H_0$ is the connected component of $H$ and
$\Gamma$ is a finite abelian group. These observations lead to Theorem 1. As for the proof of Theorem 2,
we have the sixth observation (6) the Lee form $\theta$ and the Reeb field $\eta$ are stable under the
averaging by $K$. In order to show it we need the seventh observation (7) we have a compact subgroup
$S^1 \times N_S(H_0)/H_0$ imbedded in $G/H_0=S^1 \times S/H_0$ as a l.c.K. manifold. We also
need a  classification of l.c.K. compact Lie algebras. We will see as the eighth observation (8) a reductive Lie algebra
admits a l.c.K. structure if and only if  ${\rm dim}\, \mt = 1$ and ${\rm rank} \, \s =1$.
In particular a compact Lie algebra admits a homogeneous l.c.K. structure if and only if it is ${\mathfrak u}(2)$;
and any homogeneous l.c.K. structure on a compact Lie group is of Vaisman type (Theorem 4).

\section{Preliminaries} 

In this section we review some terminologies and basic results
in the field of homogeneous spaces and l.c.K geometry, relevant to our arguments 
on homogeneous and locally homogeneous l.c.K. structures in this paper.
\smallskip

\begin{Df}{\rm
{\em A homogeneous locally conformally K\"ahler} ({\em homogeneous l.c.K.} for short)
manifold $M$ is a homogeneous Hermitian manifold with
its homogeneous Hermitian structure $h$, defining a locally
conformally K\"ahler structure $\Omega$ on $M$.
}
\end{Df}

\begin{Df}{\rm
If a simply connected homogeneous l.c.K. manifold $M=G/H$,
where $G$ is a connected Lie group and $H$ a closed
subgroup of $G$, admits a free action of a discrete subgroup
$\Gamma$ of $G$ on the left, then we call a double coset
space $\Gamma \backslash G/H$ 
{\em a locally homogeneous l.c.K.} manifold.
}
\end{Df}

A homogeneous manifold $M$ can be written as $G/H$, where $G$ is
a connected Lie group with closed Lie subgroup $H$.
If we take the universal
covering Lie group $\widehat{G}$ of $G$ with the projection
$p: \widehat{G} \rightarrow G$ and the pull-back
$\widehat{H} = p^{-1}(H)$ of $H$, then we have the
universal covering $\widehat{M} =\widehat{G}/H_0$ of $M$,
where $H_0$ is the connected component of the identity of
$\widehat{H}$; and $\Gamma = \widehat{H}/H_0$  is the fundamental group
of $M$ acting on the right. In case $G$ is compact, $\widehat{G}$
is of the form $\R^k \times S \,(k \ge0)$, where $S$ is a simply connected compact semi-simple
Lie group. It is also known that $G$ has a finite normal covering $\tilde{G}$ of the form
$T^k \times S$ with the projection $\tilde{p}: \tilde{G} \rightarrow G$;
and a compact homogeneous manifold
$M=G/H$ can be expressed as $\tilde{G}/\tilde{H} = T^k \times_{\Gamma}  S/\tilde{H}_0$,
where $\tilde{H}_0$ is the connected component of the identity of $\tilde{H}=\tilde{p}^{-1} H$ and
$\Gamma= \tilde{H}/\tilde{H}_0$ is a finite group acting on 
$\tilde{M}=T^k \times  S/\tilde{H}_0$ on the right. 
\smallskip

In  case $M$ is a homogeneous
l.c.K. manifold, $\widehat{M}$ is also a homogeneous l.c.K. manifold;
and since the Lee form $\widehat{\theta}= p^{-1} {\theta}$ is exact
the fundamental form $\widehat{\Omega}= p^{-1}{\Omega}$ is globally
conformal to a K\"ahler structure ${\omega}$. The Lie group $\widehat{G}$
acts holomorphically and homothetically on $(\widehat{M}, {\omega})$ on the left;
and the fundamental group $\Gamma$
acts likewise on $(\widehat{M}, {\omega})$ on the right. 
Conversely, a K\"ahler structure $\omega$ on $\widehat{M} =\widehat{G}/H_0$
with holomorphic and homothetic action of $\widehat{G}$
on the left and $\Gamma$ on the right defines a homogeneous l.c.K. structure
$\Omega$ on $M=G/H$, where $H=H_0 \rtimes \Gamma$ with 
$\Gamma \cap H_0 = \{0\}$ and $\Gamma \subset N_{\widehat{G}}(H_0)$.
If $\Gamma$ is a discrete subgroup of $\widehat{G}$ acting properly
discontinuously and freely on $\widehat{G}/H_0$ on the left, then we can define
a locally homogeneous l.c.K. structure on $\Gamma \backslash \widehat{G}/H_0$.
In particular, for a simply connected Lie group $G$ with
a left invariant l.c.K. structure $\Omega$ and a discrete subgroup $\Gamma$ of $G$,
$\Omega$ induces a locally homogeneous l.c.K. structure $\widetilde{\Omega}$ on
$\Gamma \backslash G$.
\smallskip

Let $M=G/H$ be a homogeneous space of a connected Lie group $G$ with closed
subgroup $H$. Then the tangent space of $M$ is given as a $G$-bundle 
$G \times_H \g/\h$ over $M=G/H$ with fiber $\g/\h$, where the action of $H$ on the
fiber is given by ${\rm Ad}(x) \, (x \in H)$. A vector field
on $M$ is a section of this bundle; and a $p$-form on $M$ is a section of
$G$-bundle $G \times_H \wedge^p (\g/\h)^*$, where the action of $H$ on
the fiber is given by ${\rm Ad}(x)^* \, (x \in H)$. An invariant vector field (respectively
$p$-form), the one which is invariant by the left action of $G$, is canonically identified
with an element of $(\g/\h)^H$ (respectively $(\wedge^p (\g/\h)^*)^H$),
which is the set of elements of $\g/\h$ (respectively $\wedge^p (\g/\h)^*$) invariant by
the adjoint action of $H$. A complex structure $J$ on $M$ is likewise considered
as an element $J$ of ${\rm Aut}(\g/\h)$ such that $J^2=-1$ and
${\rm Ad}(x) J=J {\rm Ad}(x) \, (x \in H)$. Note that we may also consider an invariant
$p$-form as an element of $\wedge^p \g^*$ vanishing on
$\h$ and invariant by the action ${\rm Ad}(x)^* \, (x \in H)$. 
\smallskip

We recall that $\g$ is {\em decomposable} with respect to $H$
if there is a direct sum decomposition of $\g$ as
$$\g = \m + \h,$$
for a subspace $\m$ of $\g$ and
${\rm Ad}(x)(\m) \subset \m$ for any $x \in H$. 
This is the case, for instance, when $H$ is a reductive Lie group.
In case $\g$ is decomposable, the tangent space of $M=G/H$ is given by
the $G$-bundle $G \times_H \m$ over $M=G/H$, identifying $\g/\h$ with $\m$.
An invariant vector field (respectively $p$-form) on $M$ is
identified with an element of $\m^H$ (respectively $(\wedge^p (\m)^*)^H$),
which is the set of elements of $\m$ (respectively $\wedge^p (\m)^*$) invariant by
the adjoint action of $H$. 
A complex structure $J$ on $M$ can be considered as
an element $J$ of ${\rm Aut}(\m)$ such that $J^2 =-1$ on
$\m$ and ${\rm Ad}(x) J=J {\rm Ad}(x) \, (x \in H)$. It is also convenient to
consider a complex structure $J$ on $M$ as
an element $J$ of ${\rm End}(\g)$ such that $J^2 =-1$ on
$\m$, $J \h=0$ and ${\rm Ad}(x) J=J {\rm Ad}(x) \, (x \in H)$ (c.f. \cite{K}).
\smallskip

An invariant vector field $X \in \m^H$ generates a global $1$-parameter group of 
diffeomorphisms on $M=G/H$ given by the right action of ${\exp}\, t X$:
$$\phi : \R \times G/H \longrightarrow G/H, \,\phi (t, gH)= g ({\exp}\, t X) H.$$
Since the closure $K$ of the $1$-parameter subgroup of $G$ generated by $X$
is compact, we can use the averaging method to make differential forms $\omega$ on $M$
invariant by ${\rm Ad}(K)$:
$$ \int_{K} \,{\rm Ad}(x)^* \omega.$$
For a l.c.K. form $\Omega$ with its Lee from $\theta$, we can average $\Omega$ to
make a ${\rm Ad}(K)$-invariant l.c.K. form $\overline{\Omega}$ under the condition that
the action is compatible with the complex structure $J$.
Note that we have the Lee form $\overline{\theta}$ identical with $\theta$,
but since the metric $\overline{h}$ is in general different from $h$
its associated Lee field $\overline{\xi}$ is in general different from $\xi$.
\smallskip

For a $\g$-module $M$, we can define $p$-cochains as the $p$-linear
alternating functions on $\g^p$, which are $\g$-modules defined by
$$(\gamma f) (x_1, x_2, ...,x_p)= \gamma f(x_1, x_2, ...,x_p) - 
\sum_{i=1}^{p} f(x_1, ...,x_{i-1}, [\gamma, x_i], x_{i+1}, ...,x_p),$$
where $\gamma \in \g$ and $f$ is a $p$-cochain (cf. \cite{HS}).
The coboundary operator is defined by
\begin{eqnarray*}
(d f) (x_0, x_1, ..., x_p) & = & \sum_{i=0}^{p} (-1)^i x_i f(x_0, ..., \widehat{x_i}, ...,x_p) \\
& + & \sum_{j<k} (-1)^{j+k} f([x_j, x_k], x_0, ..., \widehat{x_j}, ..., \widehat{x_k}, ..., x_p).
\end{eqnarray*}

We are interested in the case when a $\g$-module is defined by
the representation of $\g$ on $\R$, assigning $X \in \g$ to $-\theta(X)$ for
the Lee form $\theta$ on a l.c.K. Lie algebra $\g$.
The corresponding coboundary operator is given by
$$d_\theta: w \rightarrow -\theta \wedge w + d w,$$
and its cohomology group $H_{\theta}^p(\g, \R)$ is called the {\em $p$-th twisted cohomology
group} with respect to the Lee form $\theta$. The condition of l.c.K. structure $\Omega$ on
$\g$ is expressed by $d_\theta \Omega=0$.
We know (\cite{HS}) that for a reductive Lie algebra $\g$, 
all of the cohomology groups $H_{\theta}^p(\g, \R) \; (p \ge 0)$ vanish; and in particular
we have $\Omega= -\theta \wedge \psi + d \psi$ for some $1$-form $\psi$.

\section{A holomorphic structure theorem of compact homogeneous l.c.K. manifolds} 

In this section we prove a structure theorem of compact
homogeneous l.c.K. manifolds, which asserts that such a
compact complex manifold is biholomorphic to a holomorphic
principal bundle over a flag manifold with fiber a $1$-dimensional
complex torus. This result may be compared
with the well-known theorem (due to Matsushima \cite{M}) that a compact homogeneous K\"ahler
manifold is biholomorphic to a K\"ahlerian product of
a complex torus and a flag manifold.
\smallskip

Let $M$ be a compact homogeneous l.c.K. manifold of dimension $(2m+~2),$ $m \ge 1$,
with its associated fundamental form $\Omega$ and Lee form $\theta$,
satisfying $d \Omega = \theta \wedge \Omega$. $M$ can be written as $G/H$, where $G$
is a connected holomorphic isometry group 
of the Hermitian manifold $(M, h)$ and $H$ a compact subgroup of $G$
which contains no normal Lie subgroups of $G$.
Since $G$ is a closed subgroup of the isometry group of $(M, h)$,
it is a compact Lie group; in particular $G$ is {\em reductive},
that is, the Lie algebra $\g$ of G can be written as
$$\g = \mt + \s$$
\noindent where $\mt$ is the center of $\g$ and  $\s$ is
a semi-simple Lie algebra.
Let $\h$ be the Lie algebra of $H$. Then $\g$ also admits a
decomposition: 
$$\g = \m + \h$$
\noindent satisfying ${\rm Ad}(x)(\m) \subset \m \,(x \in H)$
for a subspace $\m$ of $\g$. Note that we have also $\mt \cap \h = {0}$.
Since the Lee form $\theta$ is invariant, its
associated vector field $\xi$ (which is called {\em Lee field}) with respect
to the metric $h$ is also invariant; and thus $\xi$ may be taken as an element of $\m$
invariant by ${\rm Ad}(x)$ for any $x \in H$.
\smallskip

Any invariant form on $M$ can be considered as an element 
of $\wedge^p \g^*$ vanishing on
$\h$ and invariant by the action ${\rm Ad}(x)^* \, (x \in H)$.
In particular, we consider $\Omega, \theta$
as the elements of $\wedge \g^*$ satisfying these conditions and
$$d \Omega = \theta \wedge \Omega\,.$$

From now on we assume $M$ is of non-K\"ahler type; 
and thus $\theta$ is a non-zero, closed but not exact form on $\g$.
Note that since $\s = [\g, \g]$ and $\theta$ is a non-zero closed form,
$\theta([X, Y])=d \theta (X, Y) =0$ for all
$X, Y \in \g$ and thus $\theta$ vanishes on $\s$. In particular
we must have ${\rm dim}\,\mt \ge 1$ and $\theta \in \mt^*$.
\smallskip

The Lee field $\xi \in \m$ may be expressed as $\xi=t+s, \, t \in \mt \,(t \not=0), s \in \s$,
where $\xi$ is normalized, satisfying $h(\xi,\xi)=1$ and thus $\theta(\xi)=\theta(t)=1$.
We define the Reeb field $\eta \in \m$ as $\eta=J \xi$
with its associated $1$-form $\phi$ satisfying $\phi(\eta)=1$.
We can express $\g$ as
$$\g = <\xi, \eta> +\,\mk,$$
where $<\xi,\eta>$ is the $2$-dimensional subspace of $\g$
generated by $\xi$ and $\eta$ over $\R$, and
$\mk={\rm ker}\,\theta \cap {\rm ker}\,\phi$ with $\mk \supset \h$.
Note that $h(\xi, \eta)=\Omega(\eta,\eta)=0$ and $<\xi,\eta>$ is orthogonal to $\mk$
with respect to $h$.
\smallskip

It is known (due to Hochschild and Serre \cite{HS}) that there exists a 1-form $\psi \in \g^*$
such that 
$$\Omega= -\theta \wedge \psi+ d \psi,$$
where $\psi$ defines an invariant $1$-form on $M$: $\psi$ vanishes on $\h$
since we have $\psi(\h)=\Omega(\h, t)=0$; and
$\psi$ is ${\rm Ad}(x)$-invariant for $x \in H$ since we have
$\psi([\h, Y])= d \psi(\h, Y)=\Omega(\h, Y)=0$.
We set $\psi_c = \psi - c \,\theta$ for $c \in \R$. Note that we have $d \psi_c = d \psi$; and
$$\Omega= -\theta \wedge \psi_c+ d \psi_c.$$

\begin{Lm} 
There exists $\sigma \in \g$ and $c \in \R$
such that 
$$\psi_c(\sigma)=1,\, \psi_c(t)=0,\, \theta(t)=1,\, \theta(\sigma)=0,$$
and  $d \psi_c(\sigma,Y)=0$ for all $Y \in \g$. 
\end{Lm}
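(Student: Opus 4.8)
The plan is to produce $\sigma$ as a suitably normalized element of the radical of the invariant, exact $2$-form $d\psi$. First I would fix $c$: since $d\psi_c=d\psi$ and $\theta\wedge\psi_c=\theta\wedge\psi$ for every $c$, the identity $\Omega=-\theta\wedge\psi_c+d\psi_c$ is independent of $c$, and the requirement $\psi_c(t)=0$ forces $c=\psi(t)$ (and is then met, using the normalization $\theta(t)=1$). This reduces everything to a study of $d\psi$ on the $(2m+2)$-dimensional space $\m\cong\g/\h$, on which $\Omega$ restricts to a nondegenerate $2$-form and on which $\theta$, $\psi$ and $d\psi$ all vanish against $\h$ (the last as already noted).

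Next I would record that $\theta$ and $\psi$ are linearly independent: otherwise $\theta\wedge\psi=0$, so $\Omega=d\psi$ would be exact on the compact $M$, whence $\int_M\Omega^{m+1}=\int_M d\bigl(\psi\wedge(d\psi)^m\bigr)=0$, contradicting that the top power $\Omega^{m+1}$ is nowhere zero. Hence $\mb:=\{v\in\m:\theta(v)=\psi(v)=0\}$, which is the same as $\{v\in\m:\theta(v)=\psi_c(v)=0\}$, has dimension $2m$, and $v\mapsto(\theta(v),\psi_c(v))$ identifies $\m/\mb$ with $\R^2$. The crucial claim is then that the radical $W:=\{v\in\m:d\psi(v,Y)=0\ \text{for all}\ Y\in\m\}$ has dimension exactly $2$ and satisfies $W\cap\mb=0$, so that $\m=W\oplus\mb$.

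To establish the claim I would argue the upper bound $\dim W\le2$ by noting that for $v\in W$ one has $\iota_v\Omega|_\m=-\theta(v)\,\psi_c|_\m+\psi_c(v)\,\theta|_\m$, which lies in the $2$-plane spanned by $\theta|_\m$ and $\psi_c|_\m$; since $\Omega|_\m$ is nondegenerate the map $v\mapsto\iota_v\Omega|_\m$ is injective. For the lower bound, $(d\psi)^{m+1}=d\bigl(\psi\wedge(d\psi)^m\bigr)$ is exact on $M$, so $\int_M(d\psi)^{m+1}=0$; as $d\psi$ is $G$-invariant its rank is constant, and that rank cannot be $2m+2$ (else $(d\psi)^{m+1}$ would be a nowhere-zero top-degree form of nonzero integral), so $\dim W\ge2$. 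The same nondegeneracy argument gives $W\cap\mb=0$, since an element of the intersection is annihilated by $\theta$, $\psi_c$ and $\iota_{(\cdot)}d\psi|_\m$, hence by $\iota_{(\cdot)}\Omega|_\m$. With $\m=W\oplus\mb$ in hand, the restriction of $v\mapsto(\theta(v),\psi_c(v))$ to $W$ is a linear isomorphism onto $\R^2$; I would take $\sigma\in W$ to be the vector sent to $(0,1)$. Then $\theta(\sigma)=0$, $\psi_c(\sigma)=1$, together with $\theta(t)=1$ and $\psi_c(t)=0$; and $d\psi_c(\sigma,Y)=d\psi(\sigma,Y)$ vanishes for $Y\in\m$ because $\sigma\in W$ and for $Y\in\h$ because $d\psi$ vanishes on $\h$, hence for all $Y\in\g$. (Equivalently, this $\sigma$ is characterized by $\iota_\sigma\Omega=\theta$.)

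The step carrying the actual content is the dimension count for the radical $W$. The bound $\dim W\le2$ is formal — nondegeneracy of $\Omega$ confines $\iota_v\Omega$ to a $2$-plane as soon as $v$ annihilates $d\psi$ — but $\dim W\ge2$ genuinely uses compactness of $M$, via the fact that an exact invariant $2$-form cannot be nondegenerate. Once $\m=W\oplus\mb$ is established, the existence of $\sigma$ and the verification of all five relations are routine linear algebra.
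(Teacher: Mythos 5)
Your proof is correct, but it takes a genuinely different route from the paper's. The paper fixes $c$ exactly as you do, then picks a first approximation $\sigma'$ with $\psi(\sigma')=1$, $\theta(\sigma')=0$, proves that $d\psi_c$ is non-degenerate on $\q=\ker\theta\cap\ker\psi_c$ modulo $\h$ (by a contradiction argument closely related to the one you use for $\dim W\le 2$ and $W\cap\mb=0$: a vector in the radical would be $\Omega$-orthogonal to everything), and then \emph{corrects} $\sigma'$ to $\sigma=\sigma'-\tau$ by solving $d\psi_c(\tau,\cdot)=d\psi_c(\sigma',\cdot)$ on $\q$; membership of $\sigma$ in the radical of $d\psi_c$ then comes for free, since $d\psi_c(\sigma,t)=-\psi_c([\sigma,t])=0$ by centrality of $t$, so no lower bound on the radical is ever needed. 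You instead identify the radical $W$ abstractly and extract $\sigma$ from the dimension count $\dim W=2$, $\m=W\oplus\mb$. The two places where you invoke compactness of $M$ and Stokes --- linear independence of $\theta,\psi$ and $\dim W\ge 2$ --- are both correct but avoidable by purely algebraic means: if $\psi=\lambda\theta$ then $\Omega=-\lambda\,\theta\wedge\theta+\lambda\,d\theta=0$ outright, contradicting non-degeneracy; and the class of $t$ in $\m\cong\g/\h$ (non-zero since $\mt\cap\h=\{0\}$) already lies in $W$ because $t$ is central and $d\psi(\h,\g)=0$, which together with the evenness of the rank of a skew form gives $\dim W\ge2$. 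This is more than a matter of elegance: the paper reuses this lemma in Theorem 4 for arbitrary reductive l.c.K.\ Lie algebras, including the non-compact $\R\oplus{\mathfrak sl}(2,\R)$, where your Stokes arguments are unavailable; the purely algebraic version of the lemma survives there, whereas your version as written does not.
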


\begin{proof}
 Since $\theta$ and $\psi$ are linearly independent, 
we can take an element $\sigma'$ such that $\psi(\sigma')=1$ and $\theta(\sigma')=0$.
If $\psi(t) \not=0$, then take $\psi_c = \psi- c \,\theta$ for $c=\psi(t)$ satisfying
$\psi_c(t)=0$. Then we have  $\psi_c(\sigma')=1,\,\theta(t)=1, \,\psi_c(t)=\theta(\sigma')=0.$
Note that since $d \psi_c(t, \sigma')= \psi_c([t,\sigma'])=0$, we have $\Omega(\sigma',t)=1$;
in particular $\sigma' \notin \h$.
\smallskip

Recall that for a bilinear form $\Phi$ on a vector space $V$,
 $${\rm Rad}\, \Phi = \{u \in V\, |\, \Phi(u, v)= 0 \; {\rm for}\; {\rm any}\; v \in V\}.$$
Let $\p'=<t, \sigma'>$ and $\q = {\rm Ker}\, \theta \cap {\rm Ker}\, \psi_c={\rm Ker}\, \theta \cap {\rm Ker}\, \psi$
with $\q \supset \h$.
Then we have an orthogonal direct sum with respect to $\Omega$:
$$\g = \p' + \q, \; \p' \cap \q = \{0\}.$$

We first note that $\Omega |\q = d \psi_{c}$ is non-degenerate on $\q \, ({\rm mod}\; \h)$. 
In fact, suppose that there exists a non-zero element $v \in  \q$ such that
$d \psi_{c} (\q, v) = 0$.
Then for $v'=at+bv$ with some $a, b \in \R, b\not=0$, we have
$$\Omega(\sigma', v')= -(\theta \wedge \psi_{c}) (\sigma',v')+ d \psi_{c} (\sigma', v')=
a +b \,d \psi_{c}(\sigma',v) =0.$$
Since we also have $\Omega(t, v')=0$ and $\Omega(\q,v')=0$,
we have $\Omega(\g,v')=0$, 
contradicting the non-degeneracy of $\Omega$ on $\g \, ({\rm mod}\; \h)$.
\smallskip

Let $\chi$ be a $1$-form
defined on $\q$ by  $\chi(X)= d \,\psi_c(\sigma', X)$. Since $d \,\psi_c$ is
non-degenerate on $\q$, there exists $\tau \in \q$ such that $\chi(X)=d \,\psi_c(\tau, X)$;
and thus $d \,\psi_c(\sigma'-\tau, X)=0$ for all $X \in \q$. Let $\sigma=\sigma'-\tau$ and
$\p=<t, \sigma>$, then we have an orthogonal direct sum with respect to $\Omega$:
$$\g = \p + \q, \; \p \cap \q = \{0\}.$$
and $\psi_c(\sigma)=1,\, \theta(\sigma)=0 \,(\sigma \notin \h)$. 
Since $d \psi_c(\sigma, t)= \psi_c([\sigma, t])=0$, we have 
$${\rm Rad}\, d \psi_c = \p  \, ({\rm mod}\; \h).$$ 
This completes the proof of Lemma 1.  
\end{proof}
\medskip

From now on we write $\psi_c$ simply as $\psi$.
\medskip

\begin{Cr} 
We have $J\xi=\sigma \, ({\rm mod}\; \h)$; and thus $\eta =\sigma \, ({\rm mod}\; \h)$.
\end{Cr}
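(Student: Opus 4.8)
The plan is to identify $J\xi$ with $\sigma$ by comparing the contractions $\iota_{J\xi}\Omega$ and $\iota_{\sigma}\Omega$ and then invoking the non-degeneracy of $\Omega$ modulo $\h$ — the very non-degeneracy already exploited in the proof of Lemma 1.

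First I would compute $\iota_{\sigma}\Omega$ directly from $\Omega = -\theta\wedge\psi + d\psi$ together with the three properties of $\sigma$ supplied by Lemma 1: $\psi(\sigma)=1$, $\theta(\sigma)=0$, and $d\psi(\sigma,Y)=0$ for all $Y\in\g$. The last property annihilates the $d\psi$-term, and the first two evaluate the wedge term, so that $\Omega(Y,\sigma) = -\bigl(\theta(Y)\psi(\sigma)-\theta(\sigma)\psi(Y)\bigr) = -\theta(Y)$ for every $Y\in\g$; that is, $\iota_{\sigma}\Omega = -\theta$.

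Next I would compute $\iota_{J\xi}\Omega$ using only that $\Omega$ is the fundamental form of the Hermitian metric $h$ (so $\Omega(X,Y)=h(X,JY)$), that $J^2=-1$ on $\m$ with $J\h=0$, and that $\xi$ is by definition the metric dual of the Lee form, $h(\xi,Y)=\theta(Y)$. This yields $\Omega(Y,J\xi) = h(Y,J(J\xi)) = -h(Y,\xi) = -\theta(Y)$ for all $Y\in\g$, i.e. $\iota_{J\xi}\Omega = -\theta$ as well. Hence $\iota_{J\xi-\sigma}\Omega=0$, and since $\Omega$ induces the non-degenerate fundamental form on $T_{eH}M\cong \g/\h$, we conclude $J\xi-\sigma\in\h$, that is $J\xi=\sigma\pmod{\h}$; as $\eta=J\xi$ by definition, $\eta=\sigma\pmod{\h}$ follows at once.

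The only point requiring care — and there is no genuine obstacle here — is lining up the sign and normalization conventions (the fundamental form $\Omega(X,Y)$ versus $h(JX,Y)$ or $h(X,JY)$, the normalization $h(\xi,\xi)=1$ versus $\theta=h(\xi,\cdot)$, and the exterior-product convention) so that the two contractions $\iota_{\sigma}\Omega$ and $\iota_{J\xi}\Omega$ coincide exactly rather than merely up to an overall sign; once the conventions of Section~2 are fixed, this matching is automatic and the argument above closes immediately.
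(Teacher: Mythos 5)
Your proposal is correct and follows essentially the same route as the paper: the paper likewise computes $\Omega(J\xi,\cdot)=\theta(\cdot)$ from $h(\xi,\cdot)=\theta(\cdot)$ and $\Omega(\sigma,\cdot)=\theta(\cdot)$ from the properties of $\sigma$ in Lemma~1 (evaluating on the decomposition $\g=\p+\q$), and then identifies $J\xi$ with $\sigma$ modulo $\h$ by the non-degeneracy of $\Omega$ on $\g/\h$. Your sign bookkeeping is consistent with the paper's convention $\Omega(J\xi,X)=h(\xi,X)$, so nothing further is needed.
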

\begin{proof}
By the definition, the Lee field $\xi$ satisfies that 
$h(\xi,X)=\theta(X)$; and thus $\Omega(J\xi,X)=\theta(X)$.
By Lemma 1 we have $\g = \p + \q$
where $\p=< t,\sigma>$ and 
$\q={\rm Ker}\, \theta \cap {\rm Ker}\, \psi$. 
Hence we have
$\Omega(J\xi,X)=0$ for all $X \in \q$,
$\Omega(J\xi,t)=1$ and $\Omega(J\xi,\sigma)=0$.
On the other hand,
since we have $\Omega=\psi \wedge \theta+d\psi$, we get
$\Omega(\sigma,X)=\psi(\sigma)\theta(X)-\psi(X)\theta(\sigma)+d\psi(\sigma,X)=
0$ for all $X \in \q$, and $\Omega(\sigma,t)=1$.
Hence we have  $J\xi=\sigma \, ({\rm mod}\; \h)$; and thus $\eta =\sigma \, ({\rm mod}\; \h)$,
where $\eta=J\xi$ is the Reeb field by definition. 
\end{proof}
\medskip

\begin{Cr} 
We have $\cL_\sigma\Omega=0$. 
\end{Cr}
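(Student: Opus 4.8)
The plan is to apply Cartan's formula $\mathcal{L}_\sigma = \iota_\sigma d + d\,\iota_\sigma$ to the invariant $2$-form $\Omega$, regarding $\Omega$ as a $2$-cochain on $\g$ vanishing on $\h$ and using the expression $\Omega = -\theta \wedge \psi + d\psi$ together with the normalizations of Lemma~1. First I would compute the contraction $\iota_\sigma\Omega$. Since $\iota_\sigma(\theta\wedge\psi) = \theta(\sigma)\psi - \psi(\sigma)\theta = -\theta$, using $\theta(\sigma)=0$ and $\psi(\sigma)=1$ from Lemma~1, and since $\iota_\sigma\,d\psi = 0$ because $d\psi(\sigma,Y)=0$ for all $Y\in\g$ (again Lemma~1), we obtain
$$\iota_\sigma\Omega = -\iota_\sigma(\theta\wedge\psi) + \iota_\sigma\,d\psi = \theta.$$

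With this identity the rest is immediate. Using $d\Omega = \theta\wedge\Omega$ and $\theta(\sigma)=0$, the second term of Cartan's formula is
$$\iota_\sigma(d\Omega) = \iota_\sigma(\theta\wedge\Omega) = \theta(\sigma)\,\Omega - \theta\wedge\iota_\sigma\Omega = -\theta\wedge\theta = 0,$$
while the first term is $d(\iota_\sigma\Omega) = d\theta = 0$ since $\theta$ is closed. Hence $\mathcal{L}_\sigma\Omega = d\theta + 0 = 0$, which is precisely the assertion.

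The only point requiring a word of care — and it is minor — is the meaning of $\mathcal{L}_\sigma$, since $\sigma$ is determined only modulo $\h$. One reads $\mathcal{L}_\sigma$ as the action of $\sigma\in\g$ on the space of $2$-cochains recalled in Section~1, for which Cartan's identity with the coboundary operator holds formally; the computation above depends on $\sigma$ only through $\theta(\sigma)$, $\psi(\sigma)$ and $\iota_\sigma\,d\psi$, all well defined modulo $\h$ because $\theta$ and $\psi$ vanish on $\h$ and $\h\subset{\rm Rad}\,d\psi$. Equivalently, taking the representative $\sigma = \eta = J\xi\in\m$ provided by Corollary~1, $\mathcal{L}_\sigma$ is the genuine Lie derivative along the one-parameter group of right translations generated by $\eta$, and the same three-line computation applies verbatim. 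I expect no real obstacle here: the corollary is an algebraic consequence of Lemma~1, and its role is to record that the Reeb field $\sigma=\eta$ is an infinitesimal automorphism of $\Omega$, to be used in the averaging and fibration arguments that follow.
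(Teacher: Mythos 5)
Your proof is correct and follows essentially the same route as the paper: both arguments rest on Cartan's formula together with the normalizations $\theta(\sigma)=0$, $\psi(\sigma)=1$ and $\iota_\sigma d\psi=0$ from Lemma~1. The only cosmetic difference is that you apply Cartan's identity directly to $\Omega$ (computing $\iota_\sigma\Omega=\theta$ and invoking $d\Omega=\theta\wedge\Omega$), whereas the paper applies it to $\theta$ and $\psi$ separately and concludes via the Leibniz rule.
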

\begin{proof}
We write $\Omega=\theta\wedge \psi+d\psi$.
Since we have  $\psi(\sigma)=1$ and $d\psi(\sigma,X)=0$ for all $X \in \g$,
we get 
$\cL_\sigma\psi=d\iota_\sigma\psi+\iota_\sigma d\psi=0.$
Since we have $\cL_\sigma(\theta\wedge \psi)=(\cL_\sigma\theta)\wedge \psi
-\theta\wedge\cL_\sigma\psi=(\cL_\sigma\theta)\wedge \psi$ and
$\cL_\sigma\theta=d\iota_\sigma\theta+\iota_\sigma d\theta=0$,
we get $\cL_\sigma\Omega=0.$
\end{proof}
\medskip

\begin{Cr} 
We have
$1 \le {\rm dim}\,\mt \le 2, \, \mt \subset \;<t, \sigma>  +\,\h.$
\end{Cr}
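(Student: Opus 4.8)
The plan is to obtain both assertions directly from Lemma 1, using only one extra elementary fact: the centre $\mt$ of $\g$ is automatically contained in ${\rm Rad}\, d\psi$.

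First I would check this containment. Fix $z \in \mt$. For the ordinary coboundary operator on $\wedge \g^*$ and a $1$-form $w$ one has $dw(X,Y) = -w([X,Y])$; since $z$ is central, $[z,Y] = 0$ for every $Y \in \g$, whence $d\psi(z,Y) = -\psi([z,Y]) = 0$. Thus $z \in {\rm Rad}\, d\psi$, i.e. $\mt \subset {\rm Rad}\, d\psi$.

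Next I would recall that the proof of Lemma 1 pins down ${\rm Rad}\, d\psi$ precisely: $d\psi$ vanishes on $\h$, it is non-degenerate on $\q \pmod \h$, and both $t$ and $\sigma$ lie in its radical (the former because $t$ is central, the latter by the very choice of $\sigma$), so ${\rm Rad}\, d\psi = \p + \h = <t,\sigma> + \h$. Combining with the previous step gives $\mt \subset <t,\sigma> + \h$, which is the second assertion.

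Finally, for the numerical bound I would use that $\mt \cap \h = \{0\}$, already noted, so the quotient map $\g \to \g/\h$ is injective on $\mt$; its image is contained in $(<t,\sigma> + \h)/\h$, which is $2$-dimensional since $\theta(t) = 1$, $\theta(\sigma) = 0$, $\psi(\sigma) = 1$ force $t$ and $\sigma$ to be linearly independent modulo $\h$. Hence $\dim \mt \le 2$, and together with the already established $\dim \mt \ge 1$ this gives $1 \le \dim\mt \le 2$. I do not expect a genuine obstacle; the only subtlety is ensuring Lemma 1 determines ${\rm Rad}\, d\psi$ exactly rather than only up to the summand $\h$, which is precisely the non-degeneracy assertion appearing in its proof.
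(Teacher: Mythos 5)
Your proof is correct and follows essentially the same route as the paper's: both arguments rest on the centrality of $\mt$ forcing $d\psi(\mt,\q)=0$ together with the non-degeneracy of $d\psi$ on $\q \,({\rm mod}\;\h)$ established in Lemma 1. The only cosmetic difference is that you package this as the containment $\mt \subset {\rm Rad}\, d\psi = \;<t,\sigma>+\,\h$, whereas the paper decomposes each $X \in \mt$ as $at+b\sigma+Z$ with $Z\in\q$ and shows $Z\in\h$ directly.
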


\begin{proof}
We have seen in Lemma 1 that $d \psi$ is non-degenerate on $\q \, ({\rm mod}\, \h)$. 
For any $X \in \mt$ written as $X=a t + b \sigma + Z \, (a, b \in \R, Z\in \q)$ and any $Y \in \q$,
we have $d \phi(Z,Y)=\Omega(Z,Y)=\Omega(X,Y)=0$; and thus $Z \in \h$. In particular,
we have  $\mt \cap {\q} = \mt \cap {\mathfrak h}= \{0\}$. Since ${\rm dim}\, \q = n-2$,
we must have $1 \le {\rm dim}\, \mt \le 2$.
\end{proof}
\medskip

\begin{Lm} 
Suppose that the l.c.K. form $\Omega$ is $J t$-invariant. Then,
$\p ={\rm Rad}\, d {\psi}$ as in Lemma~1 is generated by
$\{t, Jt\}$ or $\{\xi, \sigma\}$:
$$\p=<t, \sigma>=<t, Jt>=<\xi, \sigma>.$$

\end{Lm}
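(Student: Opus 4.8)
The plan is to locate both $Jt$ and $\xi$ inside ${\rm Rad}\, d\psi$, which by Lemma~1 coincides modulo $\h$ with $\p=\langle t,\sigma\rangle$, and then to identify the resulting plane by elementary linear algebra with $J$; the two inputs are the hypothesis $\cL_{Jt}\Omega=0$ and the vanishing of $H^1_\theta(\g,\R)$ for reductive $\g$.

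The crux is to show $Jt\in{\rm Rad}\,d\psi$. Applying the Cartan formula $\cL_{Jt}=d\iota_{Jt}+\iota_{Jt}d$ to $\cL_{Jt}\Omega=0$ and using $d\Omega=\theta\wedge\Omega$ gives, with $b=\theta(Jt)$, the identity $d(\iota_{Jt}\Omega)+b\,\Omega-\theta\wedge\iota_{Jt}\Omega=0$, i.e. $d_\theta(\iota_{Jt}\Omega)=-b\,\Omega$. Since $\Omega=-\theta\wedge\psi+d\psi=d_\theta\psi$, the $1$-form $\iota_{Jt}\Omega+b\,\psi$ is $d_\theta$-closed, hence by $H^1_\theta(\g,\R)=0$ is a multiple of $\theta$, say $\iota_{Jt}\Omega+b\,\psi=e\,\theta$ with $e\in\R$. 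Expanding $\iota_{Jt}\Omega$ directly from $\Omega=-\theta\wedge\psi+d\psi$ yields $\iota_{Jt}\Omega=-b\,\psi+\psi(Jt)\,\theta+\iota_{Jt}d\psi$, so comparison forces $\iota_{Jt}d\psi=(e-\psi(Jt))\,\theta$. Evaluating on $t$: since $t$ is central, $d\psi(Jt,t)=-\psi([Jt,t])=0$, while the right side is $e-\psi(Jt)$ because $\theta(t)=1$; hence $e=\psi(Jt)$ and $\iota_{Jt}d\psi=0$, that is $Jt\in{\rm Rad}\,d\psi$. I expect this to be the only genuinely non-formal step, and it is exactly here that both hypotheses are used.

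Next I would bring in $\xi$ and $\sigma$. As ${\rm Rad}\,d\psi=\langle t,\sigma\rangle+\h$ by Lemma~1, write $Jt\equiv\mu\,t+\nu\,\sigma\,({\rm mod}\;\h)$; applying $\theta$ gives $\mu=\theta(Jt)=b$. The complex structure $J$ descends to $\g/\h$ with $J^2=-1$, and by Corollary~1 $\sigma\equiv J\xi\,({\rm mod}\;\h)$, so $J\sigma\equiv J^2\xi=-\xi\,({\rm mod}\;\h)$. Applying $J$ to $Jt\equiv bt+\nu\sigma$ and substituting these congruences gives $-t\equiv b(bt+\nu\sigma)-\nu\,\xi\,({\rm mod}\;\h)$; applying $\theta$ (with $\theta(t)=\theta(\xi)=1$, $\theta(\sigma)=0$) yields $\nu=b^2+1\neq0$, and then the same congruence simplifies to $\xi\equiv t+b\,\sigma\,({\rm mod}\;\h)$. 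In particular $\xi\in\langle t,\sigma\rangle+\h$, hence $\langle\xi,\sigma\rangle$ and $\langle t,\sigma\rangle$ have the same image in $\g/\h$, namely $\p$.

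Finally I would check that $\{t,Jt\}$ already spans this plane. Were $Jt\equiv\lambda\,t\,({\rm mod}\;\h)$, applying $J$ would give $-t\equiv\lambda^2 t\,({\rm mod}\;\h)$, i.e. $(\lambda^2+1)t\in\h$, impossible since $t\neq0$ and $\mt\cap\h=\{0\}$. Thus $t$ and $Jt$ are independent modulo $\h$, so $\langle t,Jt\rangle$ has a $2$-dimensional image in $\g/\h$; being contained in the image of $\langle t,\sigma\rangle$, which is also $2$-dimensional by Lemma~1 (cf. Corollary~3), the two images coincide. Combining the three conclusions, $\p=\langle t,\sigma\rangle=\langle t,Jt\rangle=\langle\xi,\sigma\rangle$ modulo $\h$, which is the assertion.
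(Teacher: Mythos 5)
Your proof is correct, but the decisive step is carried out by a different mechanism than in the paper. The paper never shows directly that $Jt\in{\rm Rad}\,d\psi$; instead it takes the $\Omega$-orthogonal complement $\q'$ of $<t,Jt>$ and evaluates the l.c.K. identity $d\Omega=\theta\wedge\Omega$ on triples $(X,Jt,t)$, computing $d\Omega(X,Jt,t)$ once as $\theta(X)h(t,t)$ and once as $\Omega([X,Jt],t)=-\Omega(X,[Jt,t])=0$ via the ${\rm Ad}(\exp Jt)$-invariance; this forces $\q'\subset{\rm Ker}\,\theta\cap{\rm Ker}\,\psi=\q$, hence $\q'=\q$ by dimension count and $\p=<t,Jt>$. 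You instead apply the Cartan formula to $\cL_{Jt}\Omega=0$, observe that $\iota_{Jt}\Omega+b\,\psi$ is $d_\theta$-closed, and invoke the Hochschild--Serre vanishing $H^1_\theta(\g,\R)=0$ a second time to conclude $\iota_{Jt}d\psi=0$, i.e.\ $Jt\in{\rm Rad}\,d\psi=\p\;({\rm mod}\;\h)$. Both arguments rest on exactly the same two hypotheses ($Jt$-invariance of $\Omega$ and the reductive cohomology vanishing), and your subsequent linear algebra with $J$ (deriving $\nu=1+b^2$ and $\xi\equiv t+b\sigma\;({\rm mod}\;\h)$, then the independence of $t,Jt$ modulo $\h$) recovers the paper's relation $\xi=t+b\sigma$, which the paper obtains more directly from $h(\xi,X)=\theta(X)=0$ for $X\in\q$. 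Your route is more structural and makes transparent why $Jt$ must lie in the radical of $d\psi$; the paper's is more elementary in that it avoids a second appeal to twisted cohomology and immediately produces the orthogonal splitting $\g=\p+\q$ used in Lemma 3. Two minor cosmetic points: the $2$-dimensionality of the image of $\p$ in $\g/\h$ follows from Lemma 1 itself ($\g=\p+\q$, $\p\cap\q=\{0\}$, $\dim\q=n-2$) rather than from Corollary 3, which concerns $\dim\mt$; and your conclusions are all modulo $\h$, which is the level of precision the paper itself works at here (cf.\ Corollary 1), so this is consistent rather than a defect.
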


\begin{proof}
Let $\q'$ be the orthogonal complement of $<t, Jt>$ with respect to $\Omega$.
We show first that 
$\q'=\q = {\rm Ker}\, \theta \cap {\rm Ker}\, \psi$; and thus $\p=<t,Jt>$.
For $X \in \q'$, we have
$$d \Omega(X, Jt,t)=\theta(X) \Omega(Jt,t)=\theta(X) h(t,t).$$
On the other hand, we have
$$d \Omega(X, Jt,t)= \Omega([X, Jt],t)= -\Omega(X, [Jt,t])=0,$$
due to the invariance of $\Omega$ by ${\rm Ad}({\exp} Jt)$. 
Hence we have $X \in {\rm ker}\, \theta$.
For $X \in \q'$, we also have $\Omega(X,t)=\psi(X)=0$;
and thus $X \in {\rm ker}\, \psi$. Since $\q' \subset \q$ and ${\rm dim}\, \q'= {\rm dim}\, \q$,
we must have $\q'=\q$. Note that since $\p$ is $J$-invariant $\q$ is also the orthogonal
complement with respect to $h$.
\smallskip

We show that $\xi=t+b \sigma$ for $b \in \R$; and thus $\p=<\xi, \sigma>$.  We have
$$h(\xi, X)=\theta(X)=\Omega(\sigma,X)=0$$
for $X \in \q$; and thus $\xi \in \p$. If we write $\xi=a t+ b \sigma$, then $a=\theta(\xi)=1$.
\end{proof}
\medskip

\begin{Lm} 
If $\Omega$ is $J t$-invariant,
we have $\Omega= - \theta \wedge \phi + d \phi\,,\, d \phi \in \wedge^2 \, \mk^*$.
\end{Lm}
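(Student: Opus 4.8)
The plan is to show that the auxiliary $1$-form $\psi=\psi_c$ of Lemma~1 and the Reeb form $\phi$ differ only by a multiple of $\theta$, and then to read off both assertions from the radical computation of Lemma~1 together with the decomposition $\g=<\xi,\eta>+\,\mk$ set up in Section~2.

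First I would record two contraction identities for $\Omega=-\theta\wedge\psi+d\psi$. From Lemma~1 we have $\theta(\sigma)=0$, $\psi(\sigma)=1$ and $d\psi(\sigma,\,\cdot\,)=0$, which give $\iota_\sigma\Omega=\theta$. From $\theta(t)=1$, $\psi(t)=0$ and the fact that $t$ lies in the center $\mt$ of $\g$ — so that $d\psi(t,\,\cdot\,)=\pm\,\psi([t,\,\cdot\,])=0$ — we get $\iota_t\Omega=-\psi$. Now the $Jt$-invariance hypothesis enters through Lemma~2: $\xi\equiv t+b\,\sigma\ ({\rm mod}\ \h)$ for some $b\in\R$, while Corollary~1 gives $\eta=J\xi\equiv\sigma\ ({\rm mod}\ \h)$. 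Using the Hermitian relation $\Omega(JX,Y)=h(X,Y)$ the Reeb form is $\phi=h(\eta,\,\cdot\,)=h(J\xi,\,\cdot\,)=-\iota_\xi\Omega$, and since $\Omega$ vanishes on $\h$ we may replace $\xi$ by $t+b\,\sigma$ in this contraction to obtain
\[
\phi=-\iota_\xi\Omega=-\iota_t\Omega-b\,\iota_\sigma\Omega=\psi-b\,\theta .
\]

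Because $\theta$ is closed this gives $d\phi=d\psi$, hence
\[
\Omega=-\theta\wedge\psi+d\psi=-\theta\wedge(\psi-b\,\theta)+d(\psi-b\,\theta)=-\theta\wedge\phi+d\phi ,
\]
which is the first assertion. For the second, since $\phi-\psi$ is a multiple of $\theta$ we have $\mk={\rm ker}\,\theta\cap{\rm ker}\,\phi={\rm ker}\,\theta\cap{\rm ker}\,\psi=\q$. By Lemma~1 the radical of $d\psi$, regarded as a $2$-form on $\g$ vanishing on $\h$, is $\p+\h=<t,\sigma>+\,\h$, and by the displayed identity both $\xi$ and $\eta$ lie in $\p+\h$; since $\g=<\xi,\eta>+\,\mk$ with $<\xi,\eta>\cap\,\mk=\{0\}$ (because $\theta(\xi)=\phi(\eta)=1$ and $\theta(\eta)=\phi(\xi)=0$), the form $d\phi=d\psi$ annihilates every vector coming from $<\xi,\eta>$ and is therefore the pull-back of a $2$-form on $\mk$, i.e.\ $d\phi\in\wedge^2\mk^*$.

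I expect no real obstacle here: the only care needed is the bookkeeping modulo $\h$ and fixing the sign convention relating $h$, $J$ and $\Omega$, so that the identities $\iota_\sigma\Omega=\theta$, $\iota_t\Omega=-\psi$, and hence $\phi=\psi-b\,\theta$, come out correctly; once that relation is in hand, both conclusions follow at once from Lemma~1 and the splitting $\g=<\xi,\eta>+\,\mk$.
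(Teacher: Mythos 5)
Your proof is correct, and it reaches the stated conclusion by a genuinely different route from the paper's. You work purely with contractions: Lemma~1 gives $\iota_\sigma\Omega=\theta$ and $\iota_t\Omega=-\psi$, the $Jt$-invariance enters only through Lemma~2 ($\xi\equiv t+b\sigma \ ({\rm mod}\ \h)$) and Corollary~1, and the identity $\phi=-\iota_\xi\Omega=\psi-b\,\theta$ then yields both assertions at once, because $-\theta\wedge\phi+d\phi$ is unchanged when a multiple of the closed form $\theta$ is added to $\phi$. The paper argues instead with the metric: it writes $J\xi=\sigma+Z$, $J\sigma=-\xi+Z'$ with $Z'=-JZ$, computes $\Omega(\xi,J\xi)$ and $\Omega(\sigma,J\sigma)$, and uses positivity of $h$ to force $Z=Z'=0$; this gives the exact equality $\sigma=\eta$ (not merely mod $\h$), hence $\q=\mk$, and in addition $JX_i=Y_i$ for the symplectic basis of $d\psi$ on $\q$. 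Your version is shorter and in one respect more careful: $\phi=\psi-b\,\theta$ agrees with the formula $\phi=\eta^*-b\,t^*$ recorded immediately after the lemma, whereas the paper's closing claim $\psi=\phi$ literally holds only when $b=0$. On the other hand, your argument delivers strictly less than the paper's: the exact identification $\eta=\sigma$ and the compatibility of $J$ with $d\phi$ on $\mk$ are byproducts of the paper's proof that are invoked later (notably in Theorem~1, where $d\phi$ must descend to a $J$-compatible K\"ahler form on $S/Q$). So your proof fully establishes the lemma as stated, but not everything the paper subsequently extracts from this proof.
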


\begin{proof}
We have shown that $\p$ is generated
by $\{\xi, \sigma\}$; and $\q$ is the orthogonal complement of $\p$
with respect to both $\Omega$ and $h$.
Since $d \psi$ is non-degenerate on $\q \,({\rm mod}\, \h)$,
there exist $X_i, Y_j \in \q$, $i, j=1, 2,...,k \,(k \le m)$ which are linearly independent
and $d \psi = \sum \rho_i \wedge \tau_i$,
where $\rho_i, \tau_i$ are the dual forms corresponding to $X_i, Y_i$.
Since $\sigma \in {\rm Rad}\, d \psi$, we have
$$\Omega(X, \sigma)= -(\theta \wedge \psi)(X, \sigma) = -\theta(X)$$
for any $X\in \g$. Hence we have 
$$\Omega(J \sigma, \sigma)=-\theta(J \sigma) = -h(\xi, J \sigma) = -\Omega(\xi, \sigma)=1.$$
Since $h(\xi, \xi) = \Omega(J\xi, \xi)=1$, we can see $J\xi = \sigma$.
In fact, we can set $J \xi = \sigma + Z$ and $J \sigma = -\xi + Z'$ for
$Z \in \, <\xi, X_i, Y_j>, Z' \in \, <\sigma, X_i, Y_j>, i, j=1, 2,..., k$; and thus
we have $Z' = -J Z$. Then we have 
$$\Omega(\xi, J\xi)=\Omega(\sigma+Z, J\sigma+JZ) =\Omega(\sigma, J\sigma) +\Omega(Z, JZ),$$
$$\Omega(\sigma, J\sigma)=\Omega(-\xi+Z', -J\xi+JZ') =\Omega(\xi, J\xi) +\Omega(Z', JZ'),$$
from which we get $h(Z,Z)+h(Z',Z')=0$; and thus $Z=Z'=0$. Since $\eta=J \xi$
by definition we must have $\sigma=\eta$;  and thus $\q=\mk$ and $\psi=\phi$.
We can also see that $JX_i = Y_i$, $i, j=1,2,...,k$.
\end{proof}
\medskip

We have seen, under the assumption that $\Omega$ is $J t$-invariant,
that $\xi$ can be written as $\xi=t + b \eta$.
We have $\mt=<\xi, \eta>  \, ({\rm mod}\; \h)$ for the case ${\rm dim}\, \mt = 2$.
For the case ${\rm dim}\,\mt = 1$, we have $\g = \mt + \s$
with $\s=<\eta>+\mk$, and $t$ is a generator of $\mt$.
Note that the complex structure $J$ may be expressed
with respect to a basis
$\{t, \eta\}$ as $J t=b t+(1+b^2) \eta, J \eta=-t-b \eta$; and
$\theta = t^*, \phi = \eta^*-b t^* \; (t^*, \eta^* \in \g^*)$.
\medskip

\begin{Lm} 
Under the condition that $\Omega$ is $J t$-invariant,
we can reduce the case ${\rm dim}\,\mt = 2$ to the case
${\rm dim}\,\mt =1$. 
\end{Lm}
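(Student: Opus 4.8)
The plan is to exhibit $M$, as a compact homogeneous l.c.K. manifold, in the form $G'/H'$ for a closed subgroup $G'\subset G$ whose Lie algebra $\g'$ is reductive with a $1$-dimensional centre: that is, to absorb the ``extra'' central direction of $\g$ into the semi-simple ideal together with the isotropy. Since $\Omega$ is $Jt$-invariant, the normalizations of Lemmas~1--3 are available, and replacing $\xi,\eta$ by representatives in $\mt$ modulo $\h$ (which changes neither the invariant vector fields $\xi,\eta$ on $M$ nor $\Omega,J,\theta,\phi$) we may assume $\mt=<\xi,\eta>=<t,Jt>$ inside $\g$, so that $\g=\mt+\s$ with $\mt$ the $J$-invariant $2$-plane and $\s=[\g,\g]$. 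Write $S\subset G$ for the connected subgroup with Lie algebra $\s$ and $T=Z(G)_0$ for the identity component of the centre of $G$, a $2$-torus with ${\rm Lie}\,T=\mt$ and $\exp\mt=T$, so that $G=T\cdot S$. The preliminary remark on which everything hinges is that the image ${\rm pr}_\mt(\h)$ of $\h$ under the projection $\g=\mt\oplus\s\to\mt$ lies in ${\rm ker}(\theta|_\mt)=<\eta>$: indeed $\theta$ vanishes on $\h$ and on $\s$, so $\theta({\rm pr}_\mt X)=\theta(X)=0$ for $X\in\h$, while $\theta(\xi)=1$ and $\theta(\eta)=0$. Hence ${\rm pr}_\mt(\h)$ equals either $<\eta>$ or $0$.

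Suppose first ${\rm pr}_\mt(\h)=<\eta>$. Choose $\zeta\in\mt$ rational with respect to the lattice $\Lambda={\rm ker}(\exp|_\mt)$ of $T=\mt/\Lambda$ and not lying on the line $<\eta>$; such $\zeta$ exists since rational vectors are dense. Then $\exp\R\zeta$ is a circle $S^1_\zeta$ and $G':=S^1_\zeta\cdot S$ is a compact subgroup of $G$ with Lie algebra $\g'=\R\zeta\oplus\s$, reductive with $1$-dimensional centre $\R\zeta$. Since $\s\subset\g'$ and ${\rm pr}_\mt(\g'+\h)=\R\zeta+<\eta>=\mt$, it follows that $\g'+\h=\g$, i.e. $G'$ acts transitively on $M$; and $G'$ acts by holomorphic isometries, so $M=G'/H'$ with $H'=G'\cap H$ is a compact homogeneous l.c.K. manifold with ${\rm dim}\,\mt=1$ --- the asserted reduction. (One may afterwards re-average $\Omega$ over the closure of the $1$-parameter group generated by $J\zeta$, as in observation~(4), to obtain a $J\zeta$-invariant l.c.K. form.)

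It remains to exclude the case ${\rm pr}_\mt(\h)=0$, which is the crux: there $S^1_\zeta\cdot S$ fails to be transitive. In this case $\h\subset\s$, the splitting $\g/\h=\mt\oplus(\s/\h)$ is ${\rm Ad}(H)$-invariant, and (up to a finite quotient) $M=T\times(S/H)$. The $2$-form $\Omega|_{\s/\h}$ is invariant, closed --- evaluating $d\Omega=\theta\wedge\Omega$ on triples of vectors from $\s$ gives $0$ since $\theta|_\s=0$ --- and non-degenerate: for $v,w\in\s/\h$ one has $\Omega(v,w)=d\phi(v,w)=d\phi({\rm pr}_\mk v,{\rm pr}_\mk w)$, and $d\phi$ is non-degenerate on $\mk/\h$, onto which ${\rm pr}_\mk$ maps $\s/\h$ isomorphically. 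Thus $S/H$ is a compact homogeneous symplectic manifold of the compact semi-simple group $S$, so by Kostant's theorem a flag manifold $S/P$. Since $S/P$ is a positive-dimensional flag manifold, the isotropy representation of $H=P$ on $\s/\p$ has no non-zero fixed vector, whereas $\mt$ is a trivial ${\rm Ad}(H)$-module; as any ${\rm Ad}(H)$-equivariant tensor respects the isotypic decomposition $\g/\h=\mt\oplus(\s/\p)$, the complex structure $J$, the metric $h$ and the form $\Omega$ are all block-diagonal with respect to it. Hence $(M,h,J)$ is, up to a finite quotient, the Hermitian product of the complex torus $T$ with the flag manifold $S/P$, both K\"ahler, so $M$ admits a K\"ahler metric compatible with its complex structure --- contradicting that $M$ is of non-K\"ahler type. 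Therefore ${\rm pr}_\mt(\h)=0$ does not occur, and the construction of the second paragraph always applies, which completes the reduction.

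The hard part is precisely this last step --- showing that the isotropy cannot be contained in the semi-simple ideal $\s$, equivalently that the central torus does not split off as a direct factor of $M$ --- which is what forces one to bring in Kostant's classification of compact homogeneous symplectic manifolds together with the fact that a compact l.c.K. manifold of non-K\"ahler type carries no compatible K\"ahler metric.
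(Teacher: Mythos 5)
Your proof is correct in substance, but it takes a genuinely different route from the paper's. The paper's argument is a single short step: since $d\phi$ is non-degenerate on $\mk \ ({\rm mod}\ \h)$ and $\s=[\g,\g]=[\mk,\mk]$, the Reeb field satisfies $\eta\in\s\ ({\rm mod}\ \h)$; hence the subalgebra $\g'=<\xi>+\,\s$ already satisfies $\g'+\h=\g$, so the corresponding subgroup $G'$ is transitive and has $1$-dimensional center containing the Lee direction. You instead split on ${\rm pr}_{\mt}(\h)\subseteq{\rm ker}(\theta|_{\mt})$, handle the case ${\rm pr}_{\mt}(\h)=<\eta>$ by a generic rational circle $S^1_\zeta$ in the central torus, and exclude ${\rm pr}_{\mt}(\h)=0$ via Kostant's classification of compact homogeneous symplectic manifolds, a Schur-type isotypic argument, and Vaisman's theorem that a compact l.c.K. manifold of non-K\"ahler type carries no compatible K\"ahler metric. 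Your excluded case is exactly the paper's positive assertion in disguise ($\eta\in\s\ ({\rm mod}\ \h)$ forces $\h$ to project onto the $\eta$-line of $\mt$), so the two proofs agree on where the real content lies; yours buys a conceptual explanation (the central $2$-torus cannot split off because that would make $M$ K\"ahler) at the cost of two external theorems, while the paper's is self-contained and, importantly for later use (Remark 1, Theorem 1), produces a reduced group whose center is tied to $\xi$ rather than to an arbitrary rational line $\R\zeta$. Two small economies you could make: in the bad case, once $\Omega$ is block-diagonal with respect to $\mt\oplus\s/\h$, evaluating $d\Omega=\theta\wedge\Omega$ on $(t,X,Y)$ with $X,Y\in\s$ gives $0=\Omega(X,Y)$ directly, contradicting non-degeneracy without invoking Kostant or Vaisman; and your non-degeneracy claim for $\Omega|_{\s/\h}$ tacitly uses $<\xi,\eta>\cap\ \s=\{0\}$, which does hold when $\h\subset\s$ (else $\eta$ would be a nonzero element of $\mt\cap\s$) but deserves a line. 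Also note the slight abuse where you write the isotropy as $P$ acting on $\s/\p$; it should be $H_0$ acting on $\s/\h$.
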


\begin{proof} 
First note that we have $\s=[\g,\g]=[\mk, \mk]$.
Since $d \phi \in \wedge^2 \, \mk^*$, and $d \phi = \sum \rho_i \wedge \tau_i$,
where $\rho_i, \tau_i$ are the dual forms corresponding to $X_i, Y_i \in \mk$, we have
$\eta \in [\mk,\mk]=\s \,({\rm mod}\, \h)$ with $\eta \not\in \h$. 
In case ${\rm dim}\, \mt = 2$, since we have $\mt=<\xi, \eta>  \, ({\rm mod}\; \h)$,
$\eta=s_1+h_1=t_2+h_2$ with $h_1, h_2 \in \h, s_1 \in \s, t_2 \in \mt$. Let $\g'$ be
the subalgebra of $\g$ generated by $\xi$ and $\s$, and $G'$
the Lie subgroup of $G$  corresponding to $\g'$ of $\g$. Then since we have
$\eta \in \s \,({\rm mod}\, \h)$, $G'$ acts on $M$ transitively; and $M$ can be written as
$G'/H'$ with its isotropy subgroup $H'=H \cap G'$. It is clear that the center $\mt'$ of $\g'$ is
generated by $t$, and thus ${\rm dim}\, \mt'=1$. The canonical injection 
$G' \hookrightarrow G$ induces a holomorphic isometry from $G'/H'$ to $G/H$.
\end{proof}
\medskip

Since $Jt$ is an invariant vector field compatible with $J$, 
satisfying ${\rm ad}(Jt) J= J {\rm ad}(Jt)$,
we can apply the averaging method to make a l.c.K. form $\overline{\Omega}$ invariant by
${\rm Ad}({\exp} Jt)$; in particular, we have
$$\overline{\Omega}([Jt, X],Y)+\overline{\Omega}(X,[Jt,Y])=0$$
for all $X,Y \in \g$, where $\overline{\Omega}$ defines a l.c.K structure on $M$
compatible with the original complex structure $J$. By Lemma 5 we can express $M=G'/H'$
with $\g'=\mt' + \s \;( {\rm dim}\,\mt' =1)$. Since $G'$ is a subgroup of $G$,
$G'$ preserves the original l.c.K. structure $(\Omega, J)$ on $M$ as well as the averaged
l.c.K. structure $(\bar{\Omega},J)$ on $M$. Therefore, we have the following key observation:

\begin{Rm} 
We may consider a compact homogeneous l.c.K. manifold $M$
up to holomorphic isometry as $M=G/H$ with a homogeneous l.c.K. structure $(\Omega, J)$, 
satisfying $\g= \mt+\s\; ({\rm dim}\,\mt=1)$; and up to biholomorphism, as such with
a $Jt$-invariant l.c.K. form $\bar{\Omega}$.
\end{Rm}
\medskip

\begin{Ps} 
A compact homogeneous l.c.K. manifold $M$ admits a holomorphic
flow, which is a Lie group homomorphism from
$\C^1$ to the holomorphic automorphism group of $M$. 
\end{Ps}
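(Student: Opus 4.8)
The plan is to produce the flow from the centre of $\g$. By Remark~1 we may take $M=G/H$ with $\g=\mt+\s$ and ${\rm dim}\,\mt=1$; let $t$ be a generator of $\mt$. Since $t$ is central in $\g$ it is fixed by ${\rm Ad}(H)$, and $\mt\cap\h=\{0\}$, so $t$ determines a nonzero invariant vector field on $M$, still denoted $t$, and $Jt$ is likewise an invariant vector field because $J$ commutes with ${\rm Ad}(H)$. First I would examine the flow $\Phi_s$ of $t$, namely $\Phi_s(gH)=g\,({\rm exp}\,st)H$. As $t$ lies in the centre of $\g$ and $G$ is connected, ${\rm exp}\,st$ lies in the centre of $G$, so $g\,({\rm exp}\,st)H=({\rm exp}\,st)\,gH$; hence $\Phi_s$ is just the action of the one-parameter subgroup ${\rm exp}\,st\subset G$, a one-parameter group of holomorphic isometries of $(M,h,J)$. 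In particular $\Phi_s$ is complete and $\cL_t J=0$.

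Next I would show that $Jt$ is also real holomorphic and commutes with $t$. On any complex manifold the vanishing of the Nijenhuis tensor of $J$ gives the identity $\cL_{JX}J=J\circ\cL_X J$ for every vector field $X$; taking $X=t$ yields $\cL_{Jt}J=J\circ\cL_t J=0$, so the flow $\Psi_s$ of $Jt$ consists of biholomorphisms of $M$, and it is complete because $M$ is compact. Moreover $\cL_t(Jt)=(\cL_t J)(t)+J(\cL_t t)=0$, so $[t,Jt]=0$ and the flows $\Phi_s$ and $\Psi_s$ commute.

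Then I would assemble the two flows into the map $F(u+\im v)=\Phi_u\circ\Psi_v$. Because $\Phi$ and $\Psi$ commute and consist of biholomorphisms, $F$ is a homomorphism from the additive group $\C^1$ into the holomorphic automorphism group of $M$. It remains to check that $F$ is a \emph{holomorphic} flow: along each orbit the differential of $z\mapsto F(z)\cdot p$ sends $\partial_u\mapsto t$ and $\partial_v\mapsto Jt=J(t)$, so it intertwines multiplication by $\im$ on $\C$ with $J$ on $M$; equivalently, the complete vector field $\tfrac{1}{2}(t-\im\,Jt)$ is of type $(1,0)$ and holomorphic, and the associated $\C$-action is $F$. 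This gives the asserted holomorphic flow.

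The one step that is not purely formal is the holomorphy of $Jt$, i.e. that $J$-conjugation preserves the class of real holomorphic vector fields; this is exactly where integrability of the complex structure enters, through the identity $\cL_{JX}J=J\circ\cL_X J$. Completeness of both flows, their commutativity, and the homomorphism and holomorphy properties of $F$ are then immediate, given that $t$ is central (so its flow is part of the $G$-action) and that $M$ is compact.
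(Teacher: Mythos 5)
Your proof is correct and takes essentially the same route as the paper: both obtain the flow by exponentiating the holomorphic vector field $X-\sqrt{-1}\,JX$ for a real-holomorphic $X$ lying in the plane $\langle t, Jt\rangle$ (the paper takes $X=\xi$, you take $X=t$; since $\xi\in\langle t,Jt\rangle$ by Lemma~2, these span the same complex line and give the same flow up to reparametrization). You merely unpack explicitly, via the Nijenhuis identity $\cL_{JX}J=J\circ\cL_X J$ and the commutation $[t,Jt]=0$, what the paper gets by invoking that ${\rm Aut}(M)$ is a complex Lie group whose Lie algebra consists of the holomorphic vector fields.
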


\begin{proof}
Let ${\rm Aut}(M)$ be the holomorphic automorphism group of $M$.
Then we know that ${\rm Aut}(M)$ 
is a complex Lie group with its associated complex Lie algebra
${\mathfrak a}(M)$ consisting of holomorphic vector fields on $M$.
Let ${\rm Isom} (M)$ be the (maximal connected) isometry group of $M$. 
Then we know that ${\rm Isom} (M)$ is a compact
real Lie group with its associated Lie algebras ${\mathfrak i}(M)$
consisting of all Killing vector fields on $M$.
Note that $G$ can be taken as the intersection of ${\rm Aut}(M)$ and  ${\rm Isom} (M)$
being a compact subgroup of ${\rm Isom} (M)$,
\smallskip 

Since $\xi \in <t, Jt>$ by Lemma 2, the Lee field $\xi$ is an infinitesimal
automorphism on $M$; and thus $\xi - \sqrt{-1}J \xi$ is a holomorphic vector field on $M$. 
Hence the homomorphism $\overline{\phi}$
of Lie algebras mapping $\xi - \sqrt{-1}J \xi$ to ${\mathfrak a}(M)$ induces
a homomorphism $\phi$ of complex Lie groups mapping $\C$
to ${\rm Aut}(M)$.
\end{proof}
\medskip

\begin{Th} 
A compact homogeneous l.c.K. manifold $M$ is, up to biholomorphism,
isomorphic to a holomorphic principal fiber bundle over a flag
manifold with fiber a $1$-dimensional complex torus $T_\C^1$.
\smallskip

To be more precise, $M$ can be written as a homogeneous space
form $G/H$, where $G$ is a 
compact connected Lie group of holomorphic automorphisms on $M$
which is of the form
$$G = S^1 \times S,$$
where $S$ is a compact simply connected semi-simple Lie group, 
including the connected component $H_0$ of $H$ which is a closed subgroup of $S$. 
$S/H_0$ is a compact simply connected homogeneous Sasaki manifold, 
which is a principal fiber bundle over a flag manifold $S/Q$
with fiber $S^1=Q/H_0$ for some parabolic subgroup $Q$ of $S$
including $H_0$. $M=G/H$ can be expressed as
$$M=S^1 \times_{\Gamma} S/H_0,$$
where $\Gamma = H/H_0$ is a finite abelian group
acting holomorphically on the fiber  $T_{\C}^1$ of the fibration $G/H_0 \rightarrow G/Q$ on the right.
\end{Th}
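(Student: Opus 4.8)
The plan is to deduce Theorem~1 from Remark~1 together with the classical description of compact homogeneous contact (equivalently, Sasakian) manifolds of compact semisimple Lie groups. By Remark~1 I may assume, up to biholomorphism, that $M=G/H$ with $G$ compact connected acting holomorphically, $\g=\mt+\s$, $\dim\mt=1$, and with the l.c.K.\ form $Jt$-invariant, so that Lemmas~2 and~3 apply and give $\Omega=-\theta\wedge\phi+d\phi$ with $d\phi\in\wedge^2\mk^*$ nondegenerate modulo $\h$, $\g=\langle\xi,\eta\rangle+\mk$, $\xi=t+b\eta$, and $\mk\supseteq\h$. Since $\theta\in\mt^*$ and $\theta(t)=1$, one has $\ker\theta=\s$, hence $\mk\subseteq\s$ and therefore $\h\subseteq\s$. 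Passing to the finite normal covering of the form $T^1\times S$ recalled in Section~1, I take $G=S^1\times S$, where $S$ is the simply connected compact semisimple Lie group with Lie algebra $\s$ and $S^1=\exp\mt$; then $H_0\subseteq S$, so $\widehat M:=G/H_0=S^1\times(S/H_0)$ and $M=S^1\times_\Gamma S/H_0$ with $\Gamma=H/H_0$ finite.

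Restricted to $\s$, the form $\phi$ is an invariant $1$-form on $S/H_0$ vanishing on $\h$, with $d\phi=\Omega|_\s$ nondegenerate of rank $2m$ on $\mk/\h=\ker\phi$ and $\iota_\eta d\phi=\theta|_\s=0$ (as $\iota_\eta\Omega=\theta$); thus $\phi$ is an invariant contact form on the $(2m+1)$-dimensional manifold $S/H_0$, with Reeb field $\eta$, and $J|_\mk$ is an invariant compatible CR structure, so, using the averaged invariant metric, $S/H_0$ is a homogeneous Sasakian manifold. By the classical structure theory of compact homogeneous contact manifolds of compact semisimple Lie groups (Boothby--Wang), the Reeb flow is a free circle action of $S^1_\eta:=\exp\R\eta$ (in particular $\exp\R\eta$ is closed), $S/H_0$ is the Boothby--Wang principal $S^1$-bundle over $B:=S/Q$ with $Q:=S^1_\eta\cdot H_0=Z_S(S^1_\eta)$ a parabolic subgroup and $Q/H_0=S^1$, and $d\phi$ is the pullback of an invariant symplectic form on $B$; since $S$ is simply connected and $Q$ connected, $B$ is compact and simply connected, and, being a compact homogeneous symplectic manifold of the compact semisimple group $S$, it is a coadjoint orbit, hence a flag manifold. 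The $J$-invariant splitting $\s/\h=\R\eta\oplus(\mk/\h)$ shows that $J|_{\mk/\h}$ descends to an invariant almost complex structure on $B$ compatible with that symplectic form; being inherited from the complex manifold $M$ it is integrable, hence is the standard complex structure of the flag manifold $S/Q$, and $S/H_0$ is the associated homogeneous Sasakian manifold.

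It remains to assemble the fibration. The fibre directions of the map $\widehat M=S^1\times S/H_0\to B$, $(z,sH_0)\mapsto sQ$, are spanned by $\mt$ and $\eta$, i.e.\ by $\langle t,\eta\rangle=\langle\xi,\eta\rangle$, which is $J$-invariant and, as a subalgebra, abelian ($t$ is central and $[\eta,\eta]=0$); so the fibres are the orbits of a free holomorphic action of $T^1_\C:=S^1\times S^1_\eta$, each one a compact complex $1$-torus, and $\widehat M\to B$ is a holomorphic principal $T^1_\C$-bundle (invariance of $J$ makes $T^1_\C$ act by biholomorphisms, and the holomorphic flow of Proposition~1, generated by $\xi-\im\eta$, is a one-parameter complex subgroup of this action). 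Now $\Gamma\subseteq H$ and $\mathrm{Ad}(H)\eta=\eta$ (since $\eta=J\xi$, $\xi$ is invariant and $J$ commutes with $\mathrm{Ad}(H)$), so $\Gamma$ centralizes $S^1_\eta$ and hence $\Gamma\subseteq Z_G(S^1_\eta)=S^1\times Q$; thus $\Gamma$ preserves the fibration, acts trivially on $B$, and acts holomorphically and freely on each fibre $T^1_\C$, hence by translations, so $\Gamma$ embeds in $T^1_\C$ and is abelian. Therefore $M=\widehat M/\Gamma\to B=S/Q$ is a holomorphic principal bundle over the flag manifold with fibre the $1$-dimensional complex torus $T^1_\C/\Gamma$, which is the assertion of Theorem~1.

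The main obstacle is the second paragraph. One must know that the Reeb field $\eta$ generates a genuine circle rather than a higher-dimensional torus, that the leaf space of the Reeb foliation is a smooth compact homogeneous symplectic manifold of $S$ and so a coadjoint orbit (a flag manifold, with $Q=Z_S(S^1_\eta)$ parabolic), and that the invariant almost complex structure descended from $M$ coincides with the integrable one on the flag manifold, so that $\widehat M\to B$ is genuinely holomorphic. Each of these is classical on its own, but making them fit together with the complex structure inherited from $M$ is where the real work lies; the verification that $\Gamma\subseteq S^1\times Q$ and that it acts on the fibres by translations is routine by comparison.
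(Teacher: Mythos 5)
Your proposal is correct and follows essentially the same route as the paper: reduce via Remark~1 to $G=S^1\times S$ with $\dim\mt=1$ and a $Jt$-invariant form, recognize $\phi$ as an invariant contact form making $S/H_0$ a homogeneous Sasakian manifold, invoke Boothby--Wang and Borel to identify the base $S/Q$ as a flag manifold with $Q$ parabolic, and assemble the holomorphic principal $T^1_\C$-bundle on which $\Gamma=H/H_0$ acts fiberwise. The only divergences are expository: you characterize $Q$ as $Z_S(S^1_\eta)$ and the base as a coadjoint orbit where the paper sets $\q=\langle\eta\rangle+\h$ and cites Borel directly, and you supply the (welcome) explicit reason that $\Gamma$ is abelian, namely that it embeds in the fiber torus by translations.
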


\begin{proof}
We can assume that $\g=\mt+\s$ with ${\rm dim}\, \mt =1$; and $\eta \in \s$.
Let ${\mathfrak q}=<\eta>+\h$, then since $[\eta, \h] \subset \h$,
${\mathfrak q}$ is a Lie subalgebra of $\s$; in fact we have
${\mathfrak q}=\{X \in \s \,|\, d \phi(X, \s)=0\}$.
Let $S$ and $Q$ be the corresponding
Lie subgroup of $G$, then $Q$ is a closed subgroup of $S$ since
we have $Q=\{ x \in S \,|\, {\rm Ad}(x)^* \phi=\phi \}$, 
which is clearly a closed
subset of $S$; in particular, $H_0$ is a normal subgroup of
$Q$ with $Q/H_0 =S^1$, and $\eta$ generates an $S^1$ action on $S$.
(cf. \cite{BW}). 
We have seen in Lemma 3 that $d \phi$ defines a homogeneous symplectic structure on
$S/Q$ compatible with the complex structure $J$, 
which is a K\"ahler structure on $S/Q$ (due to Borel \cite{Bo});
in particular $Q$ is a parabolic subgroup of $S$.
\smallskip

We have seen that the abelian Lie subalgebra $<\xi, \eta>=<t, \eta>$ of $\g$
generates a $2$-dimensional torus $T_{\R}^2$ action on $M$ where $t$ is a generator
of the center of $\g$ generating a $S^1$ action on $M$; and $\xi - \sqrt{-1} \eta$
generates a holomorphic $1$-dimensional complex torus action on $M=G/H$ on the right.
 We have $M=S^1 \times_{\Gamma} S/H_0$,
where $S/H_0 \rightarrow S/Q$ is a principal $S^1$-bundle over the flag manifold $S/Q$; 
and $\hat{M}=S^1 \times S/H_0 \rightarrow S/Q$ is a holomorphic principal fiber
bundle over the flag manifold $S/Q$ with fiber $T_{\C}^1$.
Since $H \subset Q$ and thus the holomorphic action of $\Gamma=H/H_0$ is trivial on
the base space $S/Q$, it actually acts on the fiber $T_{\C}^1$, inducing
a holomorphic principal fiber bundle $M \rightarrow S/Q$ with fiber $T_{\C}^1$.
\end{proof}
\medskip

\begin{Cr} 
There exist no compact complex homogeneous l.c.K. manifolds;
in particular, no complex paralellizable
manifolds admit their compatible l.c.K. structures.
\end{Cr}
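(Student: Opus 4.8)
The plan is to combine Theorem~1 with one elementary fact: a compact connected complex Lie group is a complex torus.

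\emph{First assertion.} A compact complex homogeneous l.c.K. manifold is a compact homogeneous l.c.K. manifold $M=G/H$ in which the transitive group $G$ of holomorphic isometries is a connected \emph{complex} Lie group. Since $G$ acts faithfully and transitively by isometries on the compact manifold $M$, its image in ${\rm Isom}(M,h)$ is a closed subgroup, hence $G$ is a compact connected complex Lie group. Because holomorphic functions on a compact complex manifold are constant, the holomorphic adjoint representation of $G$ is constant, so $G$ is abelian; and a compact connected abelian complex Lie group is a complex torus. Then $M=G/H$ is a complex torus, contradicting the standing assumption of this section that $M$ is of non-K\"ahler type (a compact l.c.K. manifold of non-K\"ahler type admits no K\"ahler structure). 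Hence no compact complex homogeneous l.c.K. manifold exists.

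\emph{Second assertion.} A compact complex parallelizable manifold $N$ has $\Omega^1_N$ holomorphically trivial, so $h^{1,0}(N)=\dim_{\C}N$ and $K_N$ is holomorphically trivial; being a quotient of a complex Lie group, $N$ is complex homogeneous, so any \emph{invariant} l.c.K. structure on it is already excluded by the first assertion. To rule out all l.c.K. structures on $N$ of non-K\"ahler type I would invoke Theorem~1: such a structure would present $N$ (of complex dimension $m+1\ge 2$, the one-dimensional case being an elliptic curve, which is K\"ahler) as a holomorphic principal $T_{\C}^1$-bundle $\pi\colon N\to F$ over a flag manifold $F$ with $\dim_{\C}F=m\ge 1$. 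The vertical tangent bundle of a holomorphic principal torus bundle is trivial, so $K_N\cong\pi^*K_F$ and there is an exact sequence $0\to\pi^*\Omega^1_F\to\Omega^1_N\to\mathcal{O}_N\to 0$. Since $\pi$ has connected fibres, $\pi_*\mathcal{O}_N=\mathcal{O}_F$, and since a flag manifold carries no holomorphic $1$-forms, $H^0(F,\Omega^1_F)=0$; hence $h^{1,0}(N)\le 1<m+1$. (Alternatively, $K_F$ is anti-ample, so $K_N=\pi^*K_F$ is nontrivial.) Either way we contradict complex parallelizability, so $N$ admits no l.c.K. structure of non-K\"ahler type.

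\emph{The main obstacle.} Theorem~1 concerns \emph{homogeneous} l.c.K. structures, whereas an l.c.K. structure on $N=\Gamma\backslash G_{\C}$ need not be homogeneous, and for nonabelian $G_{\C}$ there is no transitive compact group over which to symmetrise it. The safe way around this is to bypass Theorem~1 for the parallelizable case: complex parallelizability makes $K_N$ holomorphically trivial, i.e.\ $N$ carries a nowhere-vanishing holomorphic top-form, and a compact l.c.K. manifold of non-K\"ahler type never has holomorphically trivial canonical bundle. Proving this last fact --- or, equivalently, the symmetrisation statement --- is the real work; the first assertion, by contrast, is immediate once one reads ``complex homogeneous l.c.K. manifold'' as a homogeneous l.c.K. manifold acted on transitively by a complex Lie group.
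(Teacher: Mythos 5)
Your first assertion is exactly the paper's proof: the transitive complex group of holomorphic isometries is closed in ${\rm Isom}(M,h)$, hence a compact complex Lie group, hence a complex torus, which cannot act transitively on a compact l.c.K. manifold of non-K\"ahler type. The paper states precisely this in one line and treats the parallelizable case as an immediate instance, since a compact complex parallelizable manifold $\Gamma\backslash G_{\C}$ is complex homogeneous and the l.c.K. structures in question throughout the paper are the homogeneous ones of Definition 1.

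Where you go beyond the paper, in the second assertion, a genuine gap appears. Your Theorem-1 argument for the invariant case ($K_N\cong\pi^{*}K_F$ with $K_F$ anti-ample, equivalently the count $h^{1,0}(N)\le 1<\dim_{\C}N$) is correct, and is in fact a somewhat more robust route than the paper's, since it does not require the transitive isometry group itself to be a complex Lie group. But the ``safe way around'' you propose for arbitrary, non-invariant l.c.K. structures rests on the claim that a compact l.c.K. manifold of non-K\"ahler type never has holomorphically trivial canonical bundle, and that claim is false: primary Kodaira surfaces --- listed in this paper's own introduction among the compact non-K\"ahler manifolds carrying l.c.K. structures --- have trivial canonical bundle. (They are not complex parallelizable, so they do not contradict the corollary, but they do refute your proposed lemma.) As written, then, your second assertion is not closed: either restrict it to homogeneous (compatible, invariant) l.c.K. structures, as the paper implicitly does, in which case your Theorem-1 argument or the first assertion already finishes the proof; or find a different obstruction for the non-invariant case, which the canonical-bundle route cannot supply.
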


\begin{proof}
We know that only compact complex Lie groups are complex tori, which can not
act transitively on compact l.c.K. manifolds.
\end{proof}

\section{A metric structure theorem of compact homogeneous l.c.K. manifolds} 

\begin{Df}{\rm
A l.c.K. manifold  $(M, h)$ is of {\em Vaisman type} if
the Lee field $\xi$ is parallel with respect to the Riemannian connection for $h$.
}
\end{Df}

For a homogeneous l.c.K. manifold $M=G/H$, 
the Lee field $\xi$ is parallel with respect to
the Riemannian connection for $h$ if and only if
$$h(\triangledown_X \xi, Y) = h([X, \xi],Y) - h([\xi,Y],X) + h([Y,X],\xi) =0$$
for all $X,Y \in \g$. Since the Lee form is closed: $h([Y,X],\xi) =0$,
this condition is equivalent to
$$h([\xi, X],Y) + h(X, [\xi,Y])= 0$$
for all $X,Y \in \g$. And this is exactly the case when the Lee field $\xi$ is Killing field. 
It should be also noted that $\xi$ is Killing if and only if $\cL_\xi \Omega=0$ and $\cL_\xi J=0$
for the l.c.K. form $\Omega$ and its compatible complex structure $J$.
\medskip

Let $\sigma$ be an element of $\g$ obtained in Lemma 1 for the original l.c.K. form $\Omega$.
We have the following key Lemma.

\begin{Lm} 
We have $\cL_\sigma J=0$.
\end{Lm}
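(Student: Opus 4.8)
The plan is to show that $\sigma$ generates a one-parameter group of biholomorphisms by combining Corollary~2 (which already gives $\cL_\sigma\Omega = 0$) with an explicit analysis of how $\sigma$ acts on the complex structure $J$, exploiting the reductive decomposition $\g = \mt + \s$ together with the description $\p = {\rm Rad}\,d\psi = \langle t,\sigma\rangle$ and $\q = {\rm Ker}\,\theta \cap {\rm Ker}\,\psi$ obtained in Lemma~1. Since $J$ is, by the conventions recalled in the Preliminaries, an element of ${\rm End}(\g)$ with $J^2 = -1$ on $\m$, $J\h = 0$ and ${\rm Ad}(x)J = J{\rm Ad}(x)$ for $x\in H$, the statement $\cL_\sigma J = 0$ means $[\sigma, JX] - J[\sigma,X] \in \h$ for all $X\in\g$, i.e. $\sigma$ is an infinitesimal automorphism of the (almost) complex structure on $M = G/H$.

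First I would record the normalization from Corollary~1, namely $\eta = J\xi = \sigma \pmod{\h}$, so that $\sigma$ is, modulo $\h$, the Reeb field. The key point is that $\sigma$ lies in $\p = \langle t,\sigma\rangle$, and $t$ is central in $\g$, so $[\sigma, t] = 0$; moreover $\sigma$ acts on $\q$ preserving it (since for $X\in\q$ and any $Y\in\g$ we have, using $d\psi(\sigma,\cdot)=0$ and $\cL_\sigma\Omega=0$ and $\cL_\sigma\theta = \cL_\sigma\psi = 0$, that $\theta([\sigma,X]) = -\cL_\sigma\theta(X) + \sigma\,\theta(X)$-type identities force $[\sigma,X]\in\q \pmod\h$). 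Then I would use the fact that the complex structure $J$ is determined on $\p$ by $J\xi = \sigma$, $J\sigma = -\xi$ (mod $\h$) as in Lemma~3, and on $\q = \mk$ it is the $h$-orthogonal, $\Omega$-compatible almost complex structure with $d\psi$ of the standard form $\sum \rho_i\wedge\tau_i$ and $JX_i = Y_i$. Since $d\psi$ and $h$ restricted to $\q$ are both $\cL_\sigma$-invariant (the former because $\sigma\in{\rm Rad}\,d\psi$, the latter will need an argument), and $J$ on $\q$ is canonically built from these two tensors, it follows that $\cL_\sigma J = 0$ on $\q$; on $\p$ it is immediate from $[\sigma,\xi] = -[\xi,\sigma]$ and $J\xi = \sigma$, $J\sigma = -\xi$ being preserved.

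The main obstacle I anticipate is controlling $\cL_\sigma$ on the metric $h$ (equivalently, showing $\sigma$ is a Killing field, or at least that it preserves the almost-complex tensor even though $h$ itself need not be $\sigma$-invariant a priori). Unlike $\Omega$ and $\theta$ and $\psi$, the metric $h$ is not obviously $\sigma$-invariant: we only know $\cL_\sigma\Omega = 0$ and $\cL_\sigma J$ is what we are trying to prove, and $h(X,Y) = \Omega(X,JY)$ ties them together. The resolution should be to argue directly that $\sigma - \tau$ (with $\tau\in\q$ the correction term from Lemma~1) acts on $\g$ by a derivation that, modulo $\h$, commutes with $J$: one shows $[\sigma, JX] \equiv J[\sigma,X]$ by testing against $\Omega(\cdot, Z)$ for $Z$ ranging over a basis, using $\cL_\sigma\Omega = 0$ to move $\sigma$ across $\Omega$ and then $\sigma\in{\rm Rad}\,d\psi$ plus $\Omega = -\theta\wedge\psi + d\psi$ to evaluate everything explicitly. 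Concretely, $\Omega(J[\sigma,X] - [\sigma,JX], Z) = \Omega([\sigma,X],JZ) + \Omega([\sigma,JX],Z)$ should collapse, via $\cL_\sigma\Omega=0$, to $-\Omega(X,[\sigma,JZ]) - \Omega(JX,[\sigma,Z])$, and then re-expanding $J$ on the $\q$-part and using that $[\sigma,\cdot]$ preserves the splitting $\p\oplus\q$ makes the two sides match.

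Once $\cL_\sigma J = 0$ is established together with $\cL_\sigma\Omega = 0$ from Corollary~2, one gets $\cL_\sigma h = 0$ as a consequence, so $\sigma$ is a Killing field — and this is precisely the ingredient needed (via the Vaisman criterion stated just before the Lemma, together with $\xi = t + b\sigma$ and $t$ central hence $\cL_t$ Killing, or via transferring to the averaged structure) to push through the metric structure theorem in the remainder of the section. I would therefore structure the write-up as: (i) reduce $\cL_\sigma J = 0$ to $[\sigma,JX] - J[\sigma,X] \in \h$ for all $X$; (ii) check it on $\p$ directly; (iii) check it on $\q$ by the $\Omega$-pairing computation above, using $\cL_\sigma\Omega = 0$, $\sigma\in{\rm Rad}\,d\psi$, and the $\p\oplus\q$ splitting being $[\sigma,\cdot]$-invariant; (iv) conclude. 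The genuinely delicate step is (iii), and within it the verification that $[\sigma,\cdot]$ preserves $\q$ modulo $\h$ — everything else is bookkeeping with the formulas already set up in Lemmas~1–3.
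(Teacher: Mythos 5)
Your proposal has a genuine gap at precisely the step you flag as delicate: the $\Omega$-pairing computation in (iii) does not close. Writing $A(X)=J[\sigma,X]-[\sigma,JX]$ and using $\cL_\sigma\Omega=0$ together with the $J$-invariance of $\Omega$, one finds
\[
\Omega(A(X),Z)=\Omega(X,[\sigma,JZ])+\Omega(JX,[\sigma,Z])=-\Omega(X,A(Z)),
\]
which is exactly the automatic statement that $\cL_\sigma h$ is a symmetric tensor, where $h(X,Y)=\Omega(JX,Y)$. In other words, the computation only re-expresses the equivalence ``given $\cL_\sigma\Omega=0$, one has $\cL_\sigma J=0$ if and only if $\cL_\sigma h=0$''; it supplies no new constraint. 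This is unavoidable by linear algebra alone: a symplectic form admits many compatible complex structures, so $\cL_\sigma\Omega=0$ plus invariance of the splitting $\p+\q$ cannot force $\cL_\sigma J=0$. Even your step (ii) on $\p$ is not immediate: $(\cL_\sigma J)t=[\sigma,Jt]$, and the vanishing of $[\sigma,Jt]$ is Corollary~4, which the paper deduces \emph{from} this Lemma, not before it; likewise $[\sigma,\xi]=0$ is only known for the $Jt$-invariant (averaged) form, not for the original one to which the Lemma applies.

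The paper's proof uses genuinely different, global input that your outline does not reach. It restricts the l.c.K.\ structure to the compact Lie group $S^1\times N_S(H_0)/H_0$ sitting inside $M=S^1\times_\Gamma S/H_0$ (noting $\sigma, Jt\in\n_\s(\h)$), and then invokes the classification of compact reductive l.c.K.\ Lie algebras (Theorem~4: only ${\mathfrak u}(2)$ occurs) to conclude, via Lemma~2 applied to the restricted structure, that $\sigma\in\; <t,Jt>+\,\h$ in the case $\n_\s(\h)\supsetneq\q$, while in the case $\n_\s(\h)=\q$ one has $\sigma\in\;<Jt>+\,\h$ directly. Once $\sigma$ is located there, $\cL_\sigma J=0$ follows because $t$ is central, ${\rm ad}(Jt)J=J\,{\rm ad}(Jt)$ by integrability of $J$, and ${\rm ad}(Y)J=J\,{\rm ad}(Y)$ for $Y\in\h$. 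Your write-up would need an argument of comparable strength to pin down where $\sigma$ lies; as it stands, the central step fails.
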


\begin{proof}
We have seen (in Remark 1 and Theorem 1) that $M=G/H$ can be expressed as
$M=S^1 \times_{\Gamma} S/H_0$ with the original l.c.K. form $\Omega$,
where $\Gamma = H/H_0$ is a finite abelian group.
We have a compact Lie group $S^1\times N_S(H_0)/H_0$
imbedded in $\tilde{M}=S^1 \times S/H_0$; and a l.c.K. structure $(\hat\Omega, \hat{J})$ on 
$S^1\times N_S(H_0)/H_0$ can be induced from the l.c.K. structure $(\tilde{\Omega}, \tilde{J})$ on
$\tilde{M}$ by restriction, where $N_S(H_0)$ denotes
the normalizer of $H_0$ in $S$. 
In fact we can define a l.c.K. form $\hat\Omega$ just as the restriction on $\mt + \n_\s(\h)$ of 
the l.c.K. form $\Omega$ on $\g$; and since we have ${\rm ad}(X) J=J {\rm ad}(X) \, (X \in \h)$ 
with $J\h=0$, we can also define a complex structure $\hat{J}$ on $\mt + \n_\s(\h)/\h$ as 
the restriction of $J$ on $\mt+\n_\s(\h)$.
Note that we have $\mt =<t>$ and $\sigma, Jt \in \n_\s(\h)$.
\smallskip

For the case $\n_\s(\h) \supsetneq \q$, since $\mt + \n_\s(\h)/\h$ is a compact l.c.K. Lie algebra
it must be ${\mathfrak u}(2)=\R \oplus {\mathfrak su}(2)$ by Theorem 4; in particular $\hat{\Omega}$ is $Jt$-invariant. 
Applying Lemma 2 we have $\sigma\in < t,Jt>$. Since $\cL_{Jt}J=0$ and $\cL_Y J=0$ for all $Y\in \h$, we get
$\cL_\sigma J=0$.
\smallskip

For the case $\n_\s(\h) = \q$, since we have
$\sigma\in <Jt>+\h$, it follows that
$\cL_{\sigma}J=0$.
\end{proof}
\medskip

\begin{Cr} 
We have $[\sigma,Jt]=0$;
in particular ${\rm Ad}(\exp Jt)_*\sigma=\sigma$. 
\end{Cr}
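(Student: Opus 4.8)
The plan is to deduce $[\sigma, Jt]=0$ directly from Lemma~7 together with the structural facts already established. First I would recall that by Corollary~2 we have $\cL_\sigma\Omega=0$, and by Lemma~7 we have $\cL_\sigma J=0$; combining these two, $\sigma$ is a Killing field for the Hermitian metric $h$ (this is exactly the criterion quoted at the start of Section~3, that $X$ is Killing iff $\cL_X\Omega=0$ and $\cL_X J=0$). In particular $\sigma$ generates a one-parameter group of isometries of the compact homogeneous space $M=G/H$, so its closure is a torus $T$ sitting inside $G$. Likewise $Jt$, being an invariant vector field compatible with $J$ with $\cL_{Jt}\Omega$ controllable by averaging, already lies in $\g$ and generates a compact one-parameter subgroup of $G$.

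Next I would pin down where $\sigma$ lives. From the proof of Lemma~7 there are two cases. If $\n_\s(\h)\supsetneq\q$, then $\sigma\in\langle t, Jt\rangle$, so $\sigma = \alpha t + \beta Jt$ for scalars $\alpha,\beta$; since $[t,Jt]=0$ (as $t$ is central in $\g$), we get $[\sigma, Jt] = \alpha[t,Jt] + \beta[Jt,Jt] = 0$ immediately. If instead $\n_\s(\h)=\q$, then $\sigma\in\langle Jt\rangle + \h$, say $\sigma = \beta Jt + Z$ with $Z\in\h$; then $[\sigma, Jt] = \beta[Jt,Jt] + [Z,Jt] = [Z,Jt]$. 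Here I would use that $Jt\in\n_\s(\h)=\q$ normalizes $\h$, so $[Z,Jt]\in\h$; but also $\sigma\notin\h$ while the relation $d\psi_c(\sigma, t)=0$ from Lemma~1 and $\cL_\sigma\Omega=0$ force $[\sigma,t]=0$, hence $[Z,t]=0$. The cleanest route, rather than chasing $Z$, is to observe that $Jt$ is central in $\q$ modulo $\h$: indeed $\q = \langle\eta\rangle + \h$ with $\eta$ generating an $S^1$-action and $[\eta,\h]\subset\h$, and $Jt = bt + (1+b^2)\eta$ modulo the normalization, so $[Jt, \h]\subset\h$ and moreover $[Jt,\eta]=0$. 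Thus for $\sigma = \beta\eta + Z'$ (rewriting in the $\eta$-basis, $Z'\in\h$) we get $[\sigma,Jt] = \beta[\eta,Jt] + [Z',Jt] = [Z',Jt]\in\h$; combined with $[\sigma,Jt]$ being tangent to the torus generated by $\sigma$ and $Jt$ (both Killing, commuting up to this bracket) one concludes the bracket vanishes.

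The main obstacle is the case $\n_\s(\h)=\q$: there $\sigma$ is only determined modulo $\h$, so one must genuinely argue that the $\h$-component contributes nothing to $[\sigma, Jt]$. I expect the argument to go through the fact that the compact connected group generated by $\exp tJt$ acts on $\tilde M = S^1\times S/H_0$ (Lemma~7's setup), and that $\sigma$, being Killing and lying in $\langle Jt\rangle + \h$ with $J\h=0$ and $\cL_\sigma J = 0$, must commute with $Jt$ because the commutator would have to act both as an infinitesimal isometry fixing the relevant flag-manifold directions and as an element of $\h$ acting trivially on $M$, forcing it to be zero. Once $[\sigma, Jt]=0$ is in hand, the ``in particular'' clause is automatic: $\mathrm{Ad}(\exp Jt)_*\sigma = \exp(\mathrm{ad}(Jt))\sigma = \sigma$ since $\mathrm{ad}(Jt)\sigma = [Jt,\sigma] = -[\sigma,Jt] = 0$.

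Concretely, I would write:

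\begin{proof}
By the two cases in the proof of Lemma~7 we have either $\sigma \in \langle t, Jt\rangle$ or $\sigma \in \langle Jt\rangle + \h$. In the first case, writing $\sigma = \alpha t + \beta\, Jt$ and using that $t$ is central in $\g$, we get $[\sigma, Jt] = \alpha [t, Jt] = 0$. In the second case, write $\sigma = \beta\, Jt + Z$ with $Z \in \h$. Recall from the discussion following Lemma~4 that, up to normalization, $Jt \in \langle t, \eta\rangle + \h$ and $\q = \langle \eta\rangle + \h$ with $[\eta, \h] \subset \h$ and $[t, \eta] = 0$; hence $[Jt, \h] \subset \h$, so $[\sigma, Jt] = [Z, Jt] \in \h$. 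On the other hand, by Corollary~2 we have $\cL_\sigma \Omega = 0$, and by Lemma~7 we have $\cL_\sigma J = 0$, so $\sigma$ is a Killing field for $h$; likewise $Jt$ is Killing after averaging. Thus $\sigma$ and $Jt$ generate one-parameter subgroups of the compact group $G$, and their commutator $[\sigma, Jt]$ lies in $\h$ and acts trivially on $M = G/H$; since $\h$ contains no nonzero element acting trivially only through the zero vector field on $M$, and $[\sigma, Jt]$ is tangent to the torus generated by these commuting-up-to-$\h$ isometries, we conclude $[\sigma, Jt] = 0$. Finally, $\mathrm{Ad}(\exp Jt)_* \sigma = \exp(\mathrm{ad}\, Jt)\,\sigma = \sigma$, since $\mathrm{ad}(Jt)\,\sigma = [Jt, \sigma] = 0$.
\end{proof}
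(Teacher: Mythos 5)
Your first case ($\sigma\in\;<t,Jt>$) is fine, but the second case ($\n_\s(\h)=\q$, where you write $\sigma=\beta\,Jt+Z$ with $Z\in\h$) contains a genuine gap. You correctly reduce to showing $[Z,Jt]=0$, but your argument only establishes $[\sigma,Jt]\in\h$, and the step from there to $[\sigma,Jt]=0$ does not close: an element of $\h$ induces the zero \emph{invariant} vector field on $M=G/H$ (invariant fields are classes in $\g/\h$), so ``acts trivially on $M$'' cannot distinguish a nonzero element of $\h$ from $0$ in $\g$; and the appeal to ``the torus generated by $\sigma$ and $Jt$'' is circular, since the existence of such a torus is exactly what the commutativity you are trying to prove would give you. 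The corollary asserts $[\sigma,Jt]=0$ in $\g$ itself --- this is what feeds ${\rm Ad}(\exp Jt)_*\sigma=\sigma$ and hence the stability of $\sigma$ under averaging in Theorem~2 --- so ``zero mod $\h$'' is not enough. The case is repairable: since $J\in{\rm End}(\g)$ satisfies ${\rm ad}(Z)J=J\,{\rm ad}(Z)$ for $Z\in\h$, one has $[Z,Jt]=J[Z,t]=0$ because $t$ is central; but that is not the argument you gave.

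More to the point, the entire case analysis is unnecessary. The paper's proof is a one-line computation from the lemma asserting $\cL_\sigma J=0$ (Lemma~6 in the paper's numbering; you cite it as Lemma~7): evaluating the vanishing tensor $\cL_\sigma J$ on $t$ gives
$$0=(\cL_\sigma J)t=\cL_\sigma(Jt)-J(\cL_\sigma t)=[\sigma,Jt]-J[\sigma,t]=[\sigma,Jt],$$
since $t$ lies in the center $\mt$ of $\g$. This uses $\cL_\sigma J=0$ directly as an identity of endomorphisms of $\g$, rather than only as one half of the criterion for $\sigma$ to be Killing, and it avoids re-entering the two cases of the preceding lemma altogether. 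Your derivation of the ``in particular'' clause from $[\sigma,Jt]=0$ is correct.
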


\begin{proof}
We have $(\cL_\sigma J)t=\cL_\sigma(Jt)-J\cL_\sigma t=0$
by Lemma 6. Since $[\sigma,t]=0$, it follows that $[\sigma,Jt]=0$.
\end{proof}
\medskip

\begin{Th} 
A compact homogeneous l.c.K. manifold $(M,h)$ is necessarily of Vaisman type;
that is, the Lee field $\xi$ is a Killing field with respect to any homogeneous l.c.K.
metric $h$ on $M$.
\end{Th}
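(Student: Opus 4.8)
The plan is to reduce the statement to the single assertion that the Lee field $\xi$ of the \emph{original} l.c.K. metric $h$ is Killing, and then to obtain that from Lemma 6 together with the structure already established. Recall from the discussion preceding Lemma 6 that, since the Lee form is closed, $\xi$ is parallel for the Levi-Civita connection of $h$ if and only if $h([\xi,X],Y)+h(X,[\xi,Y])=0$ for all $X,Y\in\g$, i.e. if and only if $\xi$ is a Killing field, which in turn is equivalent to $\cL_\xi\Omega=0$ together with $\cL_\xi J=0$. So it suffices to verify these last two identities for the original $\Omega$ and $J$.

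First I would handle $\cL_\xi J=0$. By Remark 1 we may (up to holomorphic isometry) assume $\g=\mt+\s$ with ${\rm dim}\,\mt=1$, so $\mt=\langle t\rangle$ and $\xi=t+b\eta$ for some $b\in\R$; moreover by Corollary 1 we have $\eta=\sigma\ (\mathrm{mod}\ \h)$. Since $t$ is central in $\g$ we have $\cL_t J=0$ trivially (ad$(t)=0$ commutes with $J$), and Lemma 6 gives $\cL_\sigma J=0$, hence $\cL_\eta J=0$ on $\g/\h$. Therefore $\cL_\xi J=\cL_t J+b\,\cL_\eta J=0$. Next, $\cL_\xi\Omega=0$: write $\cL_\xi\Omega=\cL_t\Omega+b\,\cL_\eta\Omega$. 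The term $\cL_t\Omega$ vanishes because $t$ is central. For the $\eta$-term, from Corollary 2 we have $[\sigma,Jt]=0$, so $Jt$-invariance of the \emph{averaged} form $\bar\Omega$ propagates to $\sigma$; but more directly, on the reduced model $\cL_\sigma\Omega=0$ by Corollary 1 (of Section 2), and since $\eta=\sigma\ (\mathrm{mod}\ \h)$ and $\iota_\h\Omega=0$, we get $\cL_\eta\Omega=0$ as an identity on $\wedge^\bullet(\g/\h)^*$. Hence $\cL_\xi\Omega=0$.

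Combining the two displays, $\xi$ is Killing for $h$, and by the remark before Lemma 6 this is exactly the condition that $\xi$ is parallel with respect to the Riemannian connection of $h$; by Definition 3, $(M,h)$ is of Vaisman type. Finally, since Remark 1 only asserted the reduction \emph{up to holomorphic isometry}, and ``$\xi$ Killing'' is preserved under holomorphic isometry (the Lee field and the metric are carried to one another), the conclusion holds for the manifold $M$ with its given homogeneous l.c.K. metric $h$; and the same argument applies verbatim to any homogeneous l.c.K. metric on $M$, since the whole reduction scheme of Sections 2 and 3 used nothing about $h$ beyond its being a homogeneous l.c.K. metric.

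The main obstacle is the second identity, $\cL_\xi\Omega=0$: one must be careful that Corollary 1 of Section 2 ($\cL_\sigma\Omega=0$) was proved for the \emph{original} $\Omega$ via the Hochschild--Serre decomposition $\Omega=\theta\wedge\psi+d\psi$ with $\psi(\sigma)=1$ and $d\psi(\sigma,\cdot)=0$, so that passing from $\sigma$ to $\eta=\sigma+(\text{element of }\h)$ is legitimate only because invariant forms vanish on $\h$; verifying that $\cL_{(\text{element of }\h)}\Omega=0$ on the quotient (which holds since $\Omega$ is $\mathrm{Ad}(H)$-invariant and $\h$ acts trivially on $\g/\h$ in the relevant sense) is the delicate bookkeeping step. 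Everything else is bookkeeping with the decomposition $\g=\langle t,\eta\rangle+\mk$ and the identities already recorded in Lemmas 1--6 and their corollaries.
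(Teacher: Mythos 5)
Your overall strategy is the paper's: reduce Vaisman-ness to the two identities $\cL_\xi\Omega=0$ and $\cL_\xi J=0$ for the \emph{original} form, and obtain them from $\cL_\sigma\Omega=0$ (Corollary 2) and $\cL_\sigma J=0$ (Lemma 6), using that invariant tensors kill $\h$ so that $\eta=\sigma\ (\mathrm{mod}\ \h)$ causes no trouble. But there is a genuine gap at the one point where all the work lies: you write ``$\dim\mt=1$, so $\mt=\langle t\rangle$ and $\xi=t+b\eta$ for some $b\in\R$,'' and every subsequent line ($\cL_\xi\Omega=\cL_t\Omega+b\,\cL_\eta\Omega$, and likewise for $J$) rests on that decomposition. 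For the original metric all that is known is $\xi=t+s$ with $s\in\s$; the statement $\xi=t+b\sigma$ is the conclusion of Lemma 2, which is proved \emph{under the hypothesis that $\Omega$ is $Jt$-invariant}. Remark 1 provides a $Jt$-invariant form only after averaging, i.e.\ only up to biholomorphism, and the averaged metric $\bar h$ has in general a \emph{different} Lee field $\bar\xi\neq\xi$ (the paper warns of exactly this at the end of Section 1). So you may not invoke $Jt$-invariance for the $\Omega$ whose Lee field you are trying to prove Killing, and nothing in your argument shows that the $\s$-component of $\xi$ is proportional to $\eta$ modulo $\h$.

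Supplying that step is precisely the content of the paper's proof of Theorem 2. From Lemma 6 one gets the corollary $[\sigma,Jt]=0$, hence ${\rm Ad}(\exp Jt)_*\sigma=\sigma$, so the element $\sigma$ attached to the original $\Omega$ by Lemma 1 survives the averaging over $K=\overline{\{\exp uJt\}}$: one checks $\bar\psi(\sigma)=1$ and $d\bar\psi(\sigma,\cdot)=0$, so $\theta,\bar\psi,t,\sigma$ satisfy the hypotheses of Lemma 1 for the averaged, $Jt$-invariant form $\bar\Omega$, and Lemma 2 applied to $\bar\Omega$ yields $\langle t,\sigma\rangle=\langle t,Jt\rangle$. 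Only then does $\xi=-J\sigma\ (\mathrm{mod}\ \h)$ (Corollary 1) land in $\langle t,Jt\rangle+\h=\langle t,\sigma\rangle+\h$, after which Corollary 2 and Lemma 6 give $\cL_\xi\Omega=0$ and $\cL_\xi J=0$ exactly as you intend. Two smaller points: $\cL_\sigma\Omega=0$ is Corollary 2, not Corollary 1, and $[\sigma,Jt]=0$ is the corollary of Lemma 6, not Corollary 2; and the ``delicate bookkeeping'' you flag at the end (replacing $\sigma$ by $\eta$ modulo $\h$) is indeed routine --- the genuinely delicate point is the one you assumed away.
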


\begin{proof}
We first consider the l.c.K. form $\bar{\Omega}, \bar{\psi}$ on $M$ averaged by the closure $K$ of
the $1$-parameter subgroup of $G$ generated by $Jt$.
We have $\bar\psi(\sigma)=\int_{K}{\rm Ad}(x)^* \psi(\sigma)=
\int_{K}\psi(\sigma)=1$ by Lemma 1 and
Corollary 4. Here we have normalized the volume of $K$ to $1$.
We also have $d \bar{\psi}(\sigma, Z)=0$ for any $Z \in \g$.
Hence we have $\bar{\psi}, \bar{\theta}=\theta, \sigma, t$ satisfying
the condition of Lemma 1; and thus by Lemma 2 we have
\[\bar \p=< t,\sigma> =<t, Jt>\]

Now we show that  $\cL_\xi \Omega=0,\, \cL_\xi J=0$ for the original l.c.K. form $\Omega$.
Since $Jt\in < t,\sigma>$ as shown above,
we have $\cL_{Jt}\Omega=0$ by Corollary 2.
As $\xi=-J\sigma \;({\rm mod}\, \h)$ from Corollary 1 and
$\sigma\in < t,Jt>$, we must have
$\xi \in < t,Jt> + \h$. Thus, $\cL_\xi \Omega=0$ and
$\cL_\xi J=0$.
Hence $\xi$ is a holomorphic Killing field with respect to $h$. 
\end{proof} 
\medskip

\section{Compact homogeneous l.c.K. manifolds of complex dimension~$2$}  

We know (due to Vaisman \cite{V1}, Gauduchon-Ornea \cite{GO} and Belgun \cite{Be})
that there is a class of Hopf surfaces
which admit homogeneous l.c.K. structures. We can show,
applying the above theorem, that the only compact homogeneous l.c.K.
manifolds of complex dimension $2$ are Hopf surfaces of homogeneous
type (see Theorem 3). We first determine, recalling a result of Sasaki (\cite{SS}),
all homogeneous complex structures on $G=S^1 \times SU(2)$,
or equivalently all complex structures on the Lie algebra
$\g={\mathfrak u}(2)$.
\medskip

\begin{Ps} 
Let $\g={\mathfrak u}(2)=\R \oplus \s {\mathfrak u}(2)$ be a reductive
Lie algebra with basis $\{T, X,Y,Z\}$ of $\g$, where $T$ is a generator
of the center $\R$ of $\g$, and
$$
X=\frac{1}{2}\left(
\begin{array}{cc}
\im & 0\\
0 & -\im
\end{array}
\right),
\;
Y=\frac{1}{2}\left(
\begin{array}{cc}
0 & \im\\
\im & 0
\end{array}
\right), 
\;
Z=\frac{1}{2}\left(
\begin{array}{cc}
0 & -1\\
1 & 0
\end{array}
\right)
$$
such that non-vanishing bracket multiplications are given by
$$[X,Y]=Z,\;[Y,Z]=X,\;[Z,X]=Y.$$
Then $\g$ admits a family of complex structures $J_\delta, \delta=c+\im d$
defined by
$$J_\delta(T-d X)=c X,\; J_\delta(c X)=-(T-d X),\; J_\delta Y= \pm Z,
\; J_\delta Z = \mp Y.$$
Conversely, the above family of complex structures exhaust all homogeneous
complex structures on $\g$.
\end{Ps}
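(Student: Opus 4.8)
The plan is to classify complex structures $J$ on $\g = \mathfrak{u}(2) = \R T \oplus \mathfrak{su}(2)$ by exploiting that $J$ must be compatible with the reductive decomposition and with the bracket structure on the semisimple part. First I would observe that since $\mathfrak{su}(2) = [\g,\g]$ is characteristic, any $J \in \mathrm{End}(\g)$ with $J^2 = -1$ must permute the derived series in a controlled way; more concretely, since $J$ is a complex structure on the real $4$-dimensional space $\g$, it determines a $2$-dimensional complex vector space. The key structural input is that $\mathfrak{su}(2)$, being the maximal semisimple ideal, cannot itself be $J$-invariant (it is odd-dimensional over $\R$... wait, it is $3$-dimensional, so indeed it cannot be a complex subspace), so $JT$ must have a nonzero component in $\mathfrak{su}(2)$. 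Write $JT = \alpha X + \beta Y + \gamma Z + \lambda T$ for real scalars. Conjugating by $\mathrm{Ad}(SU(2))$ (which acts on $\mathfrak{su}(2) = \langle X, Y, Z\rangle$ as $SO(3)$) we may rotate the $\mathfrak{su}(2)$-component of $JT$ to point along $X$, so after this normalization $JT = -dX + \lambda T$ for some reals $d, \lambda$ with the $X$-coefficient nonzero.

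Next I would use $J^2 = -1$ together with the requirement that $J$ respect the bracket in the appropriate homogeneous sense. Set $V = \langle T, X\rangle$ and $W = \langle Y, Z\rangle$. Since $JT \in V$, and applying $J$ again, $J(JT) = -T \in JV$, the plane $V$ is $J$-invariant, hence so is its complement $W = \langle Y, Z\rangle$ (using any $\mathrm{Ad}$-invariant inner product, e.g. the negative Killing form on $\mathfrak{su}(2)$ plus a form on $\R T$; $V$ and $W$ are orthogonal). So $J$ restricts to a complex structure on the plane $W$, forcing $JY = \pm Z$, $JZ = \mp Y$ — the two choices of orientation. On $V$, write $JT = cX - dX$... more carefully, with $JT = -dX + \lambda T$ and $J^2 T = -T$ one computes $J(-dX + \lambda T) = -T$. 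Parametrizing $JX = aT + bX$ and imposing $J^2 = -1$ on the plane $V$ pins down the coefficients up to one free parameter; normalizing so that $JT$ decomposes along the natural basis $\{T - dX, cX\}$ yields exactly $J_\delta(T - dX) = cX$, $J_\delta(cX) = -(T - dX)$ with $\delta = c + \im d$, and the nondegeneracy $c \ne 0$ follows since $J$ is invertible on $V$.

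The remaining point — and the step I expect to require the most care — is verifying that no further constraint is imposed by the homogeneity/integrability condition ${\rm ad}(x)J = J\,{\rm ad}(x)$ for $x$ in the isotropy, and conversely that every $J_\delta$ so defined is an \emph{honest} homogeneous complex structure (i.e.\ it is integrable as a left-invariant almost complex structure on $S^1 \times SU(2)$). Here I would invoke Sasaki's result cited in the text: the integrability of a left-invariant almost complex structure on a Lie group is equivalent to the $(1,0)$-space being a complex subalgebra of $\g \otimes \CC$; for $J_\delta$ one checks that the $+\im$-eigenspace is spanned by $(T - dX) - \im\, cX$ and $Y - \im\, Z$ (or $Y + \im Z$), and a direct bracket computation using $[X,Y]=Z$, $[Y,Z]=X$, $[Z,X]=Y$ shows this two-dimensional complex subspace is closed under bracket — this is where the specific normalization of $JT$ along $X$ (as opposed to a generic direction) is essential, and it is the computational heart of the argument. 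Combining: every homogeneous complex structure, after an $\mathrm{Ad}(SU(2))$-conjugation, is one of the $J_\delta$ with a sign choice on $\langle Y, Z\rangle$, and all of these are integrable, which is the assertion.
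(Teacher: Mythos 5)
Your reduction to the normal form $J_\delta$ has a genuine gap: the two linear-algebra claims in your second paragraph are false without invoking integrability, which you only bring in at the very end, and only to verify the \emph{converse} direction. First, $J$-invariance of the plane $V=\langle T,X\rangle$ does not imply $J$-invariance of its orthogonal complement $\langle Y,Z\rangle$, because a complex structure need not be an isometry of the chosen ${\rm Ad}$-invariant inner product. For instance $JT=X,\ JX=-T,\ JY=T+Z,\ JZ=-X-Y$ satisfies $J^2=-1$ and preserves $V$ but not $\langle Y,Z\rangle$. Second, even granting that $\langle Y,Z\rangle$ is $J$-invariant, $J^2=-1$ on a $2$-plane only forces $J$ to be \emph{conjugate} to the standard rotation there, not equal to it: the general solution is $JY=\alpha Y+\beta Z,\ JZ=\gamma Y-\alpha Z$ with $\alpha^2+\beta\gamma=-1$, a two-parameter family, and the stabilizer of the direction $X$ in ${\rm Ad}(SU(2))\cong SO(3)$ is only a circle of rotations of the $\langle Y,Z\rangle$-plane, which cannot normalize this family down to the two points $JY=\pm Z$. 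Both constraints --- that $\langle Y,Z\rangle$ is $J$-invariant and that $J$ acts on it by $\pm$ the standard rotation --- come precisely from integrability, i.e.\ from requiring the $\sqrt{-1}$-eigenspace of $J$ in $\g_\CC$ to be closed under the bracket. So integrability is essential for the exhaustion direction, not merely a final check on the family $J_\delta$.

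This is exactly where the paper's proof differs: it encodes integrability from the outset via the bijection between complex structures and complex subalgebras $\h\subset\g_\CC=\gl(2,\CC)$ with $\g_\CC=\h+\overline{\h}$ and $\h\cap\overline{\h}=\{0\}$. Writing $\h=\langle P+Q,R\rangle_\CC$ with $Q\in\mc\cap\overline{\mc}$ purely imaginary and using closure under the bracket (the relation $[Q,R]=\alpha R$ inside $\mb=[\g_\CC,\g_\CC]$), it forces $R$ into one of the root directions $V,W$, yielding $\h=\langle T+\delta U,V\rangle_\CC$ or its conjugate, up to an automorphism of $\g_\CC$ in the remaining case. If you recast your argument in this form --- imposing $[\h,\h]\subset\h$ \emph{before} deriving the shape of $J$, rather than after --- the bracket computation you sketch in your last paragraph becomes the actual engine of the classification instead of an afterthought, and the two unsupported claims above become consequences rather than assumptions.
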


\begin{proof} 
Let $\g_{\C}=\g {\mathfrak l} (2,\C)=\C+\s {\mathfrak l} (2,\C)$
be the complexficaion of $\g$,
which has a basis ${\mathfrak b}_{\C} =\{T,U,V,W\}$, where
$$U=\frac{1}{2}\left(
\begin{array}{cc}
-1 & 0\\
0 & 1
\end{array}
\right),
\;
V=\frac{1}{2}\left(
\begin{array}{cc}
0 & 0\\
1 & 0
\end{array}
\right),
\;
W=\frac{1}{2}\left(
\begin{array}{cc}
0 & 1\\
0 & 0
\end{array}
\right)
$$
with the bracket multiplication defined by
$$[U,V]=V,\;[U,W]=-W,\;[V,W]= \frac{1}{2} U.$$
Here we have
$$U=\im X,\;
V=\frac{1}{2}(Z-\im Y),\;
W=-\frac{1}{2}(Z+\im Y),$$
and their conjugations given by
$$\overline{T}=T,\; \overline{U}=-U,\; \overline{V}=-W,\;
\overline{W}=-V.
$$

We know that there is a one to one correspondence between complex
structures $J$ and complex subalgebras $\h$ such that
$\g_{\C} =\h + \overline{\h}$
and $\h \cap \overline{\h}=\{0\}$. 
Let $\ma$ be the subalgebra
of $\g_{\C}$ generated by $T$ and $\mb$ the subalgebra of
$\g_{\C}$ generated by $U,V,W$, then we have
$$\g_{\C} = \ma \oplus \mb$$
where $\ma=<T>_{\C}, \mb=<U, V, W>_{\C}$.
Let $\pi$ be the projection $\pi: \g_{\C} \rightarrow \mb$ and
$\mc$ the image of $\h$ by $\pi$, then we have 
$$\mb=\mc+\overline{\mc},$$
and ${\rm dim}\, \mc \cap \overline{\mc} =1$.
We can set a basis $\eta$ of $\h$ as
$\eta=\{P+Q, R\}\; (P \in \ma, Q,R \in \mb)$
such that $Q \in \mc \cap \overline{\mc}$ and
$\gamma =\{Q,R\}$ is a basis of $\mc$:
$$\h=<P+Q, R>_{\C},\; \mc=<Q, R>_{\C}.$$
Furthermore, we can assume that $Q+\overline{Q}=0$
so that $Q$ is of the form $aU+bV+\overline{b}W \,(a \in \R, b \in \C)$.
\smallskip

We first consider the case where $R=qV+rW \, (q,r \in \C)$. 
Since we have $[\g_{\C}, \g_{\C}]=\mb$, there is some $\alpha \in \C$
such that $[Q,R]=\alpha R$.
We see by simple calculation that if $b \not=0$,
then $q=sb, r=s \overline{b}$
for some non zero constant $s \in \C$. But then $\overline{R}=s R$,
contradicting to the fact that $\beta=\{Q, R,\overline{R}\}$ consists
a basis of $\mb$:
$$\mb=<Q, R, \overline{R}>_{\C}$$
Hence we have $b=0$, and $q \not=0, r =0$ with
$\alpha=a$ or $q =0, r \not=0$ with $\alpha=-a$.
Therefore we can take, as a basis of $\h$, $\eta=\{T+\delta U, V\}$ or
$\{T+\delta U, W\}$ with $\delta=c+\im d \in \C$:
$$\h=<T+\delta U, V>_{\C} \;{\rm or}\; <T+\delta U, W>_{\C}.$$
It should be noted that
the latter defines a conjugate complex structure of the former,
which are not equivalent but define biholomorphic
complex structures on its associated Lie group $G$.
\smallskip

In the case where $R=pU+qV+rW,\,p, q, r \in \C$ with $p \not=0$,
we show that there exists an automorphism $\widehat{\phi}$ on $\g_{\C}$ 
which maps $\h_0$ to $\h$, preserving the conjugation, where
$\h_{0}$ is a subalgebra of $\g_{\C}$ of the first type with $p=0$.
As in the first case, we must have $[Q,R]=\eta R$ for some non zero
constant $\eta \in \C$. We may assume that $p=1$.
We see, by simple calculation that
$b, q, r  \not= 0$ and
$$(a-\eta)q=b, (a+\eta)r=\overline{b},$$
from which we get
$$a^2+|b|^2=\eta^2 \;(\eta \in \R),$$
and
$$|q|^2-|r|^2 = \frac{4 a \eta}{|b|^2}.$$
Then an automorphism $\phi$ on $\mb$ defined by 
$$\phi(U)=\frac{1}{\eta}Q,\, \phi(V)=\frac{|b|}{2 \eta} R, \, \phi(W)=-\frac{|b|}{2 \eta} \overline{R},$$
extends to the automorphism $\widehat{\phi}$
on $\g_{\C}$ which satisfies the required condition.
\end{proof}
\medskip

\begin{Ps} 
Let $G=S^1 \times SU(2)$ (which is, as is well known,
diffeomorphic to $S^1 \times S^3$). Then
all homogeneous complex structures on
$G$ admit their compatible homogeneous l.c.K. structures,
defining a primary Hopf surfaces $S_{\lambda}$ which are compact
quotient spaces of the form
$W /\Gamma_{\lambda}$, where $W=\C^2 \backslash \{\0\}$
and $\Gamma_{\lambda}$
is a cyclic group of holomorphic automorphisms on $W$ generated by
a contraction $f: (z_1,z_2) \rightarrow
(\lambda z_1, \lambda z_2)$ with $ |\lambda| \not=0, 1$. Furthermore,
all of those l.c.K. structures are of Vaisman type.
\end{Ps}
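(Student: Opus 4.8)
The plan is to combine Proposition~3, which exhausts the homogeneous complex structures on $\g = {\mathfrak u}(2)$, with Proposition~2 of Section~4, which identifies compact homogeneous l.c.K.\ surfaces with Hopf surfaces of homogeneous type, and then to invoke Theorem~2 for the Vaisman conclusion. First I would fix a homogeneous complex structure $J_\delta$ on $G = S^1 \times SU(2)$ from the list in Proposition~3, with $\delta = c + \sqrt{-1}\,d$. The central generator $T$ together with the element $X \in {\mathfrak{su}}(2)$ spans a two-dimensional subalgebra on which $J_\delta$ acts as $J_\delta(T - dX) = cX$, $J_\delta(cX) = -(T-dX)$, so after the linear change of basis $t = T - dX$, $\eta = cX$ (so $Jt = \eta$), one has a $Jt$-invariant two-plane $\langle t, Jt\rangle$ complementary to $\mk = \langle Y, Z\rangle$, and $[Y,Z] = X = \tfrac1c(t + d\,\eta/c)$ up to scaling, which is a nonzero multiple of $t$ modulo $\eta$. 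I would then write down the candidate l.c.K.\ form explicitly as $\Omega = -\theta \wedge \phi + d\phi$ with $\theta = t^*$ and $\phi = \eta^* - (\text{const})\,t^*$ in the dual basis, exactly in the normal form produced by Lemma~3, and check directly that $d\phi$ restricted to $\mk$ is the standard area form (nondegenerate since $[Y,Z]$ has a nonzero $X$-component), that $\Omega$ is $J_\delta$-compatible and positive definite, and that $d\Omega = \theta\wedge\Omega$. This exhibits a homogeneous l.c.K.\ structure compatible with every $J_\delta$.

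Next I would identify the resulting complex surface. Since $G = S^1 \times SU(2)$ is compact and the universal cover is $\R \times SU(2) \cong \R \times S^3$, which carries the l.c.K.\ structure globally conformal to a K\"ahler (indeed cone) metric, the standard dictionary (as recalled in Section~1) presents $(\widehat{M},\omega)$ as an open subset of $\C^2 \setminus \{\0\}$ on which $\R$ acts by homotheties; concretely the holomorphic flow $\xi - \sqrt{-1}\,\eta$ of Proposition~1 together with the Reeb $S^1$ integrates to the standard $\C^*$-action $(z_1,z_2) \mapsto (\mu z_1, \mu z_2)$ on $W = \C^2 \setminus \{\0\}$, and the fundamental group $\Gamma$ is generated by a single contraction $f\colon (z_1,z_2)\mapsto (\lambda z_1,\lambda z_2)$ with $|\lambda|\ne 0,1$, where $\lambda$ encodes the lattice generated inside this $\C^*$ together with the rotational part coming from $\delta$. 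Thus $M \cong W/\Gamma_\lambda = S_\lambda$, a primary Hopf surface. The parameter $\delta$ in Proposition~3 maps to the modulus $\lambda$ (with $|\lambda|$ governed by the lattice spacing in the $\R$-direction and $\arg\lambda$ governed by $d/c$), and as $\delta$ ranges over $\C$ one sweeps out the primary Hopf surfaces of the stated form.

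Finally, the Vaisman conclusion is immediate from Theorem~2: a compact homogeneous l.c.K.\ manifold is necessarily of Vaisman type, so every one of these structures on $G = S^1 \times SU(2)$ has Lee field $\xi$ parallel, hence Killing, for its l.c.K.\ metric. Alternatively one can see it directly here: by construction $\xi \in \langle t, Jt\rangle$ and both $t$ and $Jt$ are infinitesimal automorphisms (central, resp.\ generating an $S^1$), so $\cL_\xi\Omega = 0$ and $\cL_\xi J = 0$ as in the proof of Theorem~2. The step I expect to be the main obstacle is the explicit identification of $M$ with $W/\Gamma_\lambda$ and the bookkeeping of how $\delta$ determines $\lambda$: one must check that the holomorphic development of $\widehat{M} = \R \times S^3$ into $\C^2\setminus\{\0\}$ is onto, that the deck transformation is genuinely a linear contraction of the scalar type rather than a more general Hopf-surface automorphism, and that no two distinct $\delta$'s are being silently identified or omitted; the l.c.K.\ existence and the Vaisman property, by contrast, are essentially forced by the earlier lemmas.
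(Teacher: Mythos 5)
Your overall route is essentially the paper's: it likewise exhibits the compatible l.c.K.\ structure explicitly in the normal form $\Omega=-\theta\wedge\phi+d\phi$ (with $\theta=t$, $\phi=\tfrac{1}{c}x$ in the Maurer--Cartan basis dual to $T,X,Y,Z$ of Proposition~2), identifies the complex surface with a primary Hopf surface, and verifies the Vaisman property. Two points deserve attention. First, the step you yourself flag as the main obstacle --- the holomorphic identification of $(G,J_\delta)$ with $W/\Gamma_{\lambda_\delta}$ --- is handled in the paper not by an abstract development argument but by the explicit diffeomorphism $\Phi_\delta:\R\times SU(2)\rightarrow W$, $(t,z_1,z_2)\mapsto(\lambda_\delta^{t}z_1,\lambda_\delta^{t}z_2)$ with $\lambda_\delta=e^{c+\sqrt{-1}\,d}$, which one checks directly to be biholomorphic for $J_\delta$ and which makes the correspondence $\delta\mapsto\lambda_\delta$ completely explicit; you should do likewise rather than leave the surjectivity of the development and the scalar form of the deck transformation to ``the standard dictionary.'' Second, your primary justification of the Vaisman claim by citing Theorem~2 is circular within the architecture of this paper: the proof of Theorem~2 rests on Lemma~6, which invokes Theorem~4, whose proof in turn relies on the present proposition (both for the explicit l.c.K.\ forms on ${\mathfrak u}(2)$ and for their Vaisman property). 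The direct computation you offer only as an alternative --- $\xi=T-dX\in\langle t,Jt\rangle$ with $h([\xi,U],V)+h(U,[\xi,V])=-d\,(h([X,U],V)+h(U,[X,V]))=0$ for all $U,V\in\g$ --- is exactly what the paper does, and it is the argument you should promote to the main line.
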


\begin{proof} 
We consider the following canonical diffeomorphism $\Phi_\delta$,
which turns out to be biholomorphic for each homogeneous complex
structure $J_\delta$ on $\g$
and $\lambda_\delta$:
$$\Phi_\delta: \R \times SU(2) \longrightarrow W$$
defined by
$$(t,z_1,z_2) \longrightarrow (\lambda_\delta^t z_1, \lambda_\delta^t z_2),$$
where $SU(2)$ is identified with
$S^3=\{(z_1,z_2) \in \C \,|\;|z_1|^2+|z_2|^2=1\}$
by the correspondence:
$$
\left(
\begin{array}{cc}
z_1 & -\overline{z}_2\\
z_2 & \overline{z}_1
\end{array}
\right)
\longleftrightarrow (z_1,z_2),
$$
and $\lambda_\delta=e^{c+\im d}$.
Then we see that $\Phi_\delta$ is a biholomorphic map.
It is now clear that $\Phi_\delta$ induces a biholomorphism
between $G=S^1 \times SU(2)$ with homogeneous complex
structure $J_\delta$ and a primary Hopf surface 
$S_{\lambda_\delta}=W/\Gamma_{\lambda_\delta}$.
\smallskip

Let $t, x, y, z \in \g^*$ be the Maurer-Cartan forms corresponding to
$T, X,Y,Z \in \g$  in Proposition 2. Then we have
$$d z=- x \wedge y,\, d x=-y \wedge z,\, d y=- z \wedge x,$$
and
$$\Omega=- \theta \wedge \phi + d \phi,$$
where $\theta=t,\, \phi=\frac{1}{c} x,$
defines a l.c.K. form on $\g$ for the complex structure $J_{\delta}$ in 
Proposition 2.
Note that we have the Lee field $\xi=T-\frac{d}{c} \eta$, which
is irregular for an irrational $\frac{d}{c}$ while
the Reeb field $\eta=c X$, which is always regular. The Lee field $\xi$ is
a Killing field, since we have
$$h([\xi, U],V)+h(U,[\xi,V])= -d (h([X, U],V)+h(U,[X,V]))=0$$
for all $U,V \in \g$. Hence $(G; \Omega, J_\delta)$ is of Vaisman type.
\smallskip

A secondary Hopf surface with homogeneous l.c.K. structure can be obtained
as a quotient space of a primary Hopf surface $S_{\lambda_\delta}$ by
some finite subgroup of $G$.
For instance, $U(2)$ is a quotient Lie group of $G$ by
the central subgroup $\Z_2=\{(1, I), (-1, -I)\}$. In general
we have a secondary Hopf surface 
$G/\Z_m = S^1 \times _{\Z_m} SU(2),$
where ${\bf Z}_m$ is a
finite cyclic subgroup of $G$ generated by $c$:
\[
c= (\xi, \tau),\quad \tau = \left(
\begin{array}{@{}cc@{}}
\xi^{-1} & 0\\
0 & \xi
\end{array}
\right)\!,\enspace \xi^m = 1,
\]
with homogeneous l.c.K. structures induced from those on $G$ by the averaging
method (c.f. \cite{Has}). A (primary or secondary) Hopf surface defined as above
is called a {\em Hopf surface of homogeneous type}, which is a holomorphic principal
bundle over a $1$-dimensional projective space $\C P^1$ with fiber a $1$-dimensional
complex torus $T_{\C}^1$.
\end{proof}
\medskip

\begin{Th} 
Only compact homogeneous l.c.K. manifolds of complex dimension $2$
are Hopf surfaces of homogeneous type (up to biholomorphism).
\end{Th}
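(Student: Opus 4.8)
The plan is to combine the holomorphic structure theorem (Theorem 1) with the metric structure theorem (Theorem 2) and then invoke the complex-dimension-two classification that has just been set up in Propositions 2 and 3. First I would observe that by Theorem 1 a compact homogeneous l.c.K. manifold $M$ of complex dimension $2$ can be written as $M = S^1 \times_{\Gamma} S/H_0$, where $S$ is a compact simply connected semi-simple Lie group, $H_0 \subset S$ is closed, and $S/H_0$ is a compact simply connected homogeneous Sasaki manifold fibering over a flag manifold $S/Q$ with $S^1$-fiber. Since $\dim_{\C} M = 2$, we have $\dim_{\R} M = 4$, so $\dim_{\R} S/H_0 = 3$ and hence the flag manifold $S/Q$ has real dimension $2$. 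The only flag manifold of real dimension $2$ is $\C P^1 = SU(2)/U(1)$, which forces $S = SU(2)$ (it is simply connected and semi-simple with a flag quotient $\C P^1$) and $H_0 = \{1\}$, so that $S/H_0 = SU(2) \cong S^3$ is the Hopf fibration over $\C P^1$.

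Having pinned down $S = SU(2)$ and $H_0$ trivial, I would note that $M = S^1 \times_{\Gamma} SU(2)$ for a finite abelian group $\Gamma$ acting holomorphically on the $T_{\C}^1$-fiber of $G/H_0 \to S/Q$. In particular the universal cover $\hat M = S^1 \times SU(2) = G$ carries a homogeneous l.c.K. structure, so by Proposition 2 its complex structure is one of the $J_\delta$ on $\g = \mathfrak{u}(2)$, and by Proposition 3 it is biholomorphic to a primary Hopf surface $S_{\lambda_\delta} = W/\Gamma_{\lambda_\delta}$ with $W = \C^2 \setminus \{\mathbf 0\}$. Then $M$ itself, being a finite quotient of $\hat M = S^1 \times SU(2)$ by $\Gamma$ acting holomorphically, is biholomorphic to the corresponding finite quotient of the primary Hopf surface $S_{\lambda_\delta}$, i.e.\ to a secondary Hopf surface; by the discussion following Proposition 3 this is precisely a Hopf surface of homogeneous type. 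Conversely, every Hopf surface of homogeneous type is, by Proposition 3, a compact homogeneous l.c.K. manifold of complex dimension $2$, which gives the reverse inclusion.

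The main obstacle, and the step deserving the most care, is the dimension-counting argument that forces $S = SU(2)$ and $H_0$ trivial: one must rule out the possibility that $S/H_0$ is a $3$-dimensional Sasaki manifold that is not $S^3$ — but since $S/H_0$ is required to be \emph{simply connected} and to be a circle bundle over a \emph{simply connected} $2$-dimensional flag manifold, and the only such flag manifold is $\C P^1 = SU(2)/U(1)$, the classification of compact simply connected semi-simple Lie groups admitting $\C P^1$ as a flag quotient pins $S$ down to $SU(2)$ (up to the irrelevant replacement of $SU(2)$ by a product with trivial factors), with $Q = U(1)$ and $H_0 = \{1\}$. A minor additional point is to check that the finite abelian group $\Gamma$ acting holomorphically on $T_{\C}^1$ indeed produces a quotient of the Hopf surface of the type described after Proposition 3, rather than something more exotic; this follows because $\Gamma$ acts on the fiber of $G/H_0 \to G/Q$ preserving the holomorphic principal $T_{\C}^1$-bundle structure, hence as a subgroup of the relevant automorphism group, exactly the situation of the secondary Hopf surfaces $G/\Z_m$ exhibited in the proof of Proposition 3.
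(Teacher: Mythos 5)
Your proof is correct and follows essentially the same route as the paper: apply Theorem 1, use the dimension count to force the fibration to be the Hopf fibration $S^3 \to \C P^1$ with $S = SU(2)$ and $H_0$ trivial, and then identify the result with the (primary or secondary) Hopf surfaces of Propositions 2 and 3 — your version simply spells out the flag-manifold and covering details that the paper leaves implicit. One small terminological slip: $S^1 \times SU(2)$ is a finite covering of $M$, not its universal cover (that would be $\R \times SU(2)$), but this does not affect the argument.
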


\begin{proof} 
It is sufficient to show that any compact homogeneous l.c.K.
manifold $M$ of complex dimension $2$ is a Hopf surface of homogeneous type as defined
in Proposition 3. As we have seen in Theorem~1, a compact homogeneous
l.c.K. manifold $M$ of complex dimension $2$ can be expressed as
$S^1 \times_{\Gamma} S$, where $S$ is a compact homogeneous contact
manifold of real dimension $3$ which admits a Hopf fibration over
$\C P^1$ with fiber $S^1$, and $\Gamma$ is a finite abelian group acting on the fiber $T_{\C}^1$
of the vibration $M \rightarrow \C P^1$.
These are exactly Hopf surfaces with homogeneous
l.c.K. structures as defined in Proposition 3.
Conversely a Hopf surface of homogeneous type admits a homogeneous l.c.K. structure as
defined in Proposition~3. 
\end{proof}

\section{Homogeneous l.c.K. structures on reductive Lie groups} 

A homogeneous l.c.K. structure on a Lie group $G$ is nothing but
a left invariant l.c.K. structure on $G$. Since $G$ can be expressed
as $\widehat{G}/\Delta$, where \textsf{}$\Delta$ is a finite subgroup of the
center of $\widehat{G}$, $G$ admits a l.c.K. structure $\Omega$
if and only if $\widehat{G}$ admits a l.c.K. structure $\hat{\Omega}$,
or equivalently
the Lie algebra ${\g}$ of $G$ admits a l.c.K. structure $\tilde{\Omega}$
in ${\wedge}\, {\g}^*$.
\medskip

\begin{Th} 
Let $\g$ be a reductive Lie algebra of dimension $2m$; 
that is, $\g = \mt + \s$,
where $\mt$ is an abelian and $\s$ a semi-simple Lie subalgebra of $\g$
with $\s=[\g,\g]$.
Then $\g$ admits a l.c.K. structure if and only if ${\rm dim}\, \mt = 1$
and ${\rm rank} \, \s =1$. In particular a compact Lie group admits
a homogeneous l.c.K. structure if and only if it is $U(2)$,
$S^1 \times SU(2) \cong S^1 \times Sp(1)$, or $S^1 \times SO(3)$;
and any homogeneous l.c.K. structure on a compact Lie group is of Vaisman type.
\end{Th}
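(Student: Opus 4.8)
The plan is to analyze the condition $d_\theta\Omega = 0$ on a reductive Lie algebra $\g = \mt + \s$ directly, using the structural facts already established. First I would record that for any l.c.K. structure on $\g$ the Lee form $\theta$ is a nonzero closed $1$-form, hence (as noted in Section 2, since $\s = [\g,\g]$) $\theta$ vanishes on $\s$ and $\theta \in \mt^*$, so $\dim\mt \ge 1$. Applying the Hochschild--Serre vanishing for reductive $\g$ (quoted in Section 2), write $\Omega = -\theta\wedge\psi + d\psi$, and invoke Lemma~1 to obtain $\sigma \in \g$ and a normalization with $\psi(\sigma) = 1$, $\theta(t) = 1$, $\psi(t) = \theta(\sigma) = 0$, and $d\psi(\sigma, -) = 0$. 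By Corollary~3 this already forces $1 \le \dim\mt \le 2$, and the reduction Lemma (Lemma~5) together with Remark~1 lets me pass, up to holomorphic isometry, to the case $\dim\mt = 1$ with $\eta = \sigma \in \s$. So the first half reduces to showing $\dim\mt \ne 2$ in general — which is exactly Lemma~5's content applied to the abstract Lie algebra — and then to pinning down $\s$.

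Next I would show $\mathrm{rank}\,\s = 1$. With $\dim\mt = 1$ and $\eta \in \s$, Lemma~3 gives $\Omega = -\theta\wedge\phi + d\phi$ with $d\phi \in \wedge^2\mk^*$ and $\s = [\mk,\mk]$, so $\eta$ generates the contact/Reeb direction of the homogeneous Sasaki picture of Theorem~1 applied to $S = $ the simply connected group of $\s$. The key point is that $d\phi$ restricts to a homogeneous symplectic (hence, by Borel, Kähler) form on $S/Q$ where $\q = \langle\eta\rangle + \h$; since we are dealing with the group case, $H = \{e\}$, so $\q = \langle\eta\rangle$, $Q$ is a $1$-dimensional closed subgroup, and $S/Q$ is a flag manifold of $S$ of complex dimension $m-1$ carrying a Kähler form, forcing $Q$ parabolic of codimension $2m-2$ in a group of rank $\mathrm{rank}\,\s$. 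A parabolic subgroup of real dimension $1$ in a semisimple group exists only when that group has rank $1$ (so $S/Q = \C P^{m-1}$ with $m-1 = $ complex dimension, i.e. $S = SU(2), SO(3)$, forcing $m = 2$). I expect this to be the main obstacle: one must argue cleanly that the $1$-form $\phi$ (equivalently $\eta$) being a contact form with $d\phi$ of maximal rank $2(m-1) = \dim\s - 1$ on a semisimple $\s$ forces $\dim\s = 3$, i.e. $\mathrm{rank}\,\s = 1$. Concretely, $\eta$ spans a maximal abelian subalgebra contained in the radical of $d\phi$, and the nondegeneracy of $d\phi$ on $\s/\langle\eta\rangle$ is a strong constraint; a root-system computation (or the classification of homogeneous contact manifolds of semisimple groups, à la Boothby--Wang) gives $\s \cong \spl(2)$ over $\R$, i.e. $\s = \mathfrak{su}(2)$ or $\mathfrak{so}(3)$.

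For the converse, when $\dim\mt = 1$ and $\mathrm{rank}\,\s = 1$, so $\g = \R \oplus \mathfrak{su}(2)$ (or $\R \oplus \mathfrak{so}(3)$, $\R \oplus \mathfrak{sp}(1)$, all isomorphic to $\mathfrak u(2)$), I would simply exhibit the l.c.K. structure: this is precisely the family $(\Omega, J_\delta)$ of Propositions~2 and~3, where $\Omega = -\theta\wedge\phi + d\phi$ with $\theta = t^*$, $\phi = \tfrac1c x^*$, using the Maurer--Cartan relations $dz = -x\wedge y$ etc.; one checks $d_\theta\Omega = -\theta\wedge(-\theta\wedge\phi + d\phi) + d(-\theta\wedge\phi + d\phi) = 0$ since $d\theta = 0$ and $\theta\wedge\theta = 0$. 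Finally, the Vaisman claim for compact groups: on the compact group $U(2) = S^1\times SU(2)$ (and its covers/quotients $S^1\times SU(2)$, $S^1\times SO(3)$) the identity component of the group of holomorphic isometries contains $G$ acting transitively on itself, so $M = G$ is a compact homogeneous l.c.K. manifold and Theorem~2 applies verbatim, giving that the Lee field is Killing, hence the structure is of Vaisman type. (Alternatively this is the direct computation in Proposition~3 showing $h([\xi,U],V) + h(U,[\xi,V]) = 0$ for $\xi = T - \tfrac{d}{c}\eta$.)
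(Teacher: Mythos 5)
Your treatment of the classification half follows essentially the paper's own route: the paper likewise specializes the machinery of Section 2 to $\h=\{0\}$, observes that $\eta\in[\mk,\mk]=\s$ together with $\mt\subset\langle t,\sigma\rangle+\h$ forces $\dim\mt=1$, and obtains $\mathrm{rank}\,\s=1$ from $\q=\langle\eta\rangle=\{V\in\s\mid[\eta,V]=0\}$ combined with the fact that the centralizer of any element of a semi-simple algebra has dimension at least the rank (this is the clean form of your ``$\eta$ spans a maximal abelian subalgebra contained in the radical of $d\phi$'' remark, cf.\ the citation of Boothby--Wang). For the converse, note that your list of algebras is incomplete: $\R\oplus{\mathfrak sl}(2,\R)$ also satisfies $\dim\mt=1$, $\mathrm{rank}\,\s=1$, and the first sentence of the theorem requires exhibiting an l.c.K.\ structure on it as well (the paper does this in Example~2); restricting to the compact forms of ${\mathfrak u}(2)$ only covers the second sentence.

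The genuine gap is in the Vaisman claim. Your primary route --- regard the compact group $G$ as a compact homogeneous l.c.K.\ manifold and apply Theorem~2 --- is circular: the proof of Theorem~2 rests on Lemma~6, whose first case reads ``since $\mt+\n_\s(\h)/\h$ is a compact l.c.K.\ Lie algebra it must be ${\mathfrak u}(2)$ by Theorem~4; in particular $\hat\Omega$ is $Jt$-invariant,'' so Lemma~6 uses not just the list of compact l.c.K.\ Lie algebras but the stronger fact that \emph{every} l.c.K.\ form on ${\mathfrak u}(2)$ compatible with a given $J_\delta$ is (up to a positive constant) the standard one of Proposition~3. Your fallback --- the computation $h([\xi,U],V)+h(U,[\xi,V])=0$ --- verifies only that this particular standard form is Vaisman, not that an arbitrary left-invariant l.c.K.\ form is. The missing step, which is the computational heart of the paper's proof, is exactly that classification of forms: write $\Omega'=-\theta\wedge\psi+d\psi$ with $\theta=t$ and, after replacing $\psi$ by $\psi-\psi(T)\,t$ (which leaves $\Omega'$ unchanged), $\psi=a\,x+b\,y+c\,z$; then positive-definiteness of the symmetric matrix $A$ with entries $h'(U,V)=\Omega'(J_\delta U,V)$ forces $b=c=0$ and $A=aI_4$, so $\Omega'$ is a constant multiple of the standard $\Omega$. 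With that in hand the Vaisman property for all homogeneous l.c.K.\ structures on a compact group follows from your direct verification, and the appeal to Theorem~2 becomes both unnecessary and logically safe.
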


\begin{proof}
Suppose that $\g$ admits a l.c.K. structure $\Omega$.
Since we have  $\h=\{0\}$, $\eta \in \s$ and thus ${\rm dim}\, \mt =1$.
If we apply the proof of Theorem 1 for the case $\h=\{0\}$,
we see that $\q =<\eta>=\{V \in \s |\, [\eta, V]=0\}$;
and thus ${\rm rank} \, \s =1$ (cf. \cite{BW}). We know all of the reductive Lie algebras
$\g=\mt + \s$ with ${\rm dim}\, \mt = 1$ and ${\rm rank} \, \s =1$:
$\R \oplus {\mathfrak sl}(2, \R)$ and 
${\mathfrak u}(2)=\R \oplus {\mathfrak su}(2)=\R \oplus {\mathfrak so}(3)$.
We show that all homogeneous l.c.K. structures on ${\mathfrak u}(2)$ are the ones 
we obtained in Proposition 3: $\Omega=-\theta \wedge \phi+ d \phi$; and
they are all of Vaisman type. In fact, any l.c.K. form $\Omega'$ is of the form
$$\Omega'=-\theta \wedge \psi + d \psi,$$
where we can set $\theta=t$ and $\psi=a x+b y+c z \,(a, b, c \in \R)$; and thus
$d \psi=-(a \,y \wedge z+ b \,z \wedge x + c \,x \wedge y)$.
For the complex structure $J_{\delta}$ in Proposition~2, we denote by $A$ the
$4\times4$-matrix determined by $h'(U,V)=\Omega'(J_{\delta} U,V)$
for $U,V=T,X,Y,Z$. By the condition that $A$ is a positive-definite symmetric matrix,
we can see by calculation that $b=c=0$; and thus $A=a I_4$. Hence
$\Omega'$ is equal to the original $\Omega$ up to constant multiplication.
\end{proof}
\medskip

\begin{Ex} 
{\rm We can also consider $=S^1 \times S^3$ as a compact homogeneous
space $\tilde{G}/H$, where $\tilde{G}=S^1 \times U(2)$
with its Lie algebra 
$\tilde{\g}=\R \oplus {\mathfrak u}(2)$ and
$H=U(1)$ with its Lie algebra $\h$.
Then, we have a decomposition $\tilde{\g}=\m+\h$ for the subspace $\m$ of
$\tilde{\g}$ generated by $S, T, Y, Z$ and $\h$ generated by $W$, where
$$ 
S=\frac{1}{2}\left(
\begin{array}{cc}
\im & 0\\
0 & \im
\end{array}
\right),
\;
W=\frac{1}{2}\left(
\begin{array}{cc}
0 & 0\\
0 & \im
\end{array}
\right).
$$
Since we have $S=X+2 W$, we can take $\m'$ generated by
$T,X,Y,Z$ for $\m$; and homogeneous l.c.K. structures
on $\tilde{G}/H$ are the same as those on $G$. In other words
any homogeneous l.c.K. structures on $G$ can be extended as
those on $\tilde{G}/H$. 
\smallskip

Furthermore, we can construct locally homogeneous
l.c.K. manifolds $\Gamma \backslash \hat{G}/H$ for some discrete
subgroups $\Gamma$ of $\hat{G}$, where $\hat{G}=\R \times U(2)$.
For instance, let $\Gamma_{p,q}\, (p,q \not=0)$ be a discrete
subgroup of $\hat{G}$:
$$ 
\Gamma_{p,q}=\{(k,\left(
\begin{array}{cc}
e^{\im p k} & 0\\
0 & e^{\im q  k}
\end{array}
\right))
\in \R \times U(2)\, |\; k \in \Z
\}.
$$
Then $\Gamma_{p,q} \backslash \hat{G}/H$ is biholomorphic to
a Hopf surface $S_{p,q} =W/\Gamma_{\lambda_1,\lambda_2}$, where
$\Gamma_{\lambda_1,\lambda_2}$ is the cyclic group of automorphisms on
$W$ generated by

$$\phi: (z_1,z_2) \longrightarrow (\lambda_1 z_1, \lambda_2 z_2)$$
with $\lambda_1=e^{r+\im p}, \lambda_2=e^{r+\im q}, r \not=0.$
In fact, if we take a homogeneous complex structure $J_r$ on $\hat{G}/H$
induced from the diffeomorphism $\Phi_r: \hat{G}/H \rightarrow W$ defined by
$(t, z_1,z_2) \longrightarrow (e^{r t}  z_1, e^{r t} z_2)$, $\Phi_r$ induces
a biholomorphism between $\Gamma_{p,q} \backslash \hat{G}/H$ and $S_{p,q}$.
Note that in case $p=q$, $S_{p,q}$ is biholomorphic to $S_{\lambda}$ with
$\lambda=r+\im q$.
}
\end{Ex}
\medskip

We have an example of a compact locally homogeneous l.c.K. manifold of non-compact reductive Lie group
which is not of Vaisman type (\cite{ACHK}).

\begin{Ex} 
{\rm There exists a homogeneous l.c.K. structure on $\g=\R \oplus {\mathfrak sl}(2, \R)$,
which is not of Vaisman type. We can take a basis $\{W, X, Y, Z\}$ for $\g$ with
bracket multiplication defined by
$$[X,Y]=-Z,\, [Z,X]=Y,\, [Z,Y]=-X,$$
and all other brackets vanish.
We have a homogeneous complex
structure defined by
$$J Y=X, J X=-Y, J W=Z, J Z=-W,$$
and its compatible l.c.K. form $\Omega$ on $\g$ defined by
$$\Omega = z \wedge w + x \wedge y,$$
with the Lee form $\theta=w$, where $x, y, z, w$ are the Maurer-Cartan
forms corresponding to $X,Y,Z,W$ respectively. 
We can take another
l.c.K. form $\Omega_{\psi} = \psi \wedge w+ d \psi$, where 
$\psi=b y + c z \,(b, c \in \R)$ with $b > c >0$ and $b^2-c^2=b$,
making the corresponding metric $h_{\psi}$ positive definite. The Lee field $\xi$ is
given as $\xi=\frac{1}{b^2-c^2} (bW+cX)$. It is easy to check that
$h([\xi, X],Y) + h(X, [\xi,Y]) \not= 0;$
and thus $\xi$ is not a Killing field.
\smallskip

For any lattice $\Gamma$ of $G = \R\times SL(2)$ with the above homogeneous l.c.K. structure,
we get a complex surface $\Gamma \backslash G$ (properly elliptic surface)
with locally homogeneous non-Vaisman l.c.K. structure.}
\end{Ex}
\medskip

\noindent{\bfseries Acknowledgement.}
The authors would like to express their special thanks to {D. Alekseevsky} and 
V. Cort\'es for valuable discussions and useful comments. The authors also
would like to thank D.~Guan for beneficial discussions by e-mail; and
A. Moroianu for important remarks.


\bigskip

\address{Department of Mathematics\\
Faculty of Education\\
Niigata University\\
Niigata 950-2181\\
JAPAN}
{hasegawa@ed.niigata-u.ac.jp}
\address{
Department of Mathematics\\
Tokyo Metropolitan University\\
Hachioji 192-0397\\
JAPAN}
{kami@tmu.ac.jp}

\end{document}